\newtheorem{theorem}{Theorem}[section]
\newtheorem{corollary}[theorem]{Corollary}
\newtheorem{lemma}[theorem]{Lemma}
\newtheorem*{definition*}{Definition}
\newcommand\be{\begin{eqnarray*}}
\newcommand\ee{\end{eqnarray*}}
\newcommand\beq{\begin{equation}}
\newcommand\eeq{\end{equation}}
\newcommand\ben{\begin{eqnarray}}
\newcommand\een{\end{eqnarray}}
\begin{document}
\title{Expansion for the product of matrices in groups
}
\author{
Doowon Koh\thanks{Department of Mathematics, Chungbuk National University. Email: {\tt koh131@chungbuk.ac.kr}}
\and 
    Thang Pham\thanks{Department of Mathematics,  UCSD. Email: {\tt v9pham@ucsd.edu}}
  \and
  Chun-Yen Shen \thanks{Department of Mathematics,  National Taiwan University. Email: {\tt cyshen@math.ntu.edu.tw}}
 \and 
 Le Anh Vinh \thanks{Department of Mathematics, VNU. Email:{\tt vinhla@vnu.edu.vn}} 
  }

\date{}
\maketitle
%\title{Three-variable expanders and distinct distances over finite fields}
%{Distinct distances on $A^d$ over prime fields}
\date{}
\maketitle
\begin{abstract}
In this paper, we give strong lower bounds on the size of the sets of products of matrices in some certain groups. More precisely, we prove an analogue of a result due to Chapman and Iosevich for matrices in $SL_2(\mathbb{F}_p)$ with restricted entries on a small set. We also provide extensions of some recent results on expansion for cubes in Heisenberg group due to Hegyv\'{a}ri and Hennecart. 
\end{abstract}
\section{Introduction}
Let $\mathbb{F}_p$ be a prime field. We denote by $SL_2(\mathbb{F}_p)$ the set of $2\times 2$ matrices with determinant one over $\mathbb{F}_p$ . Given $A\subset \mathbb{F}_p$, we define 
\[R(A):=\left\lbrace \left(\begin{matrix}
a_{11}&a_{12}\\
a_{21}&a_{22}
\end{matrix}\right)\in SL_2(\mathbb{F}_p)\colon a_{11}, a_{12}, a_{21}\in A\right\rbrace.\]
It was proved by Chapman and Iosevich \cite{Io} by Fourier analytic methods that if $|A|\gg p^{5/6}$ then 
\[|R(A)\cdot R(A)|\gg p^3.\]
Throughout this paper the notation $U \ll V$ means $U \leq cV$ for some absolute constant $c>0$, and $U \gtrsim V$ means $U\gg (\log U)^{-c} V$ for some absolute constant $c>0$.
It has been extensively studied about the size of the products of $R(A)$. In particular, the breakthrough work of H. A. Helfgott \cite{ha} asserts that if $E$ is a subset  of $SL_2(\mathbb {F}_p)$ and is not contained in any proper subgroup with $|E| < p^{3-\delta}$, then $|E \cdot E \cdot E| > c |E|^{1+\epsilon}$ for some $\epsilon=\epsilon(\delta) >0$. The result mentioned above by Chapman and Iosevich is to give a quantitative estimate when the size of the set $A$ is large.  However it is considered to be a difficult problem to obtain some quantitative estimate for the same problem when the size of the set $A$ is not large. It is basically because the Fourier analytic methods are effective only when the size of the set $A$ is large. In this paper, we address the case of small sets, and give a lower bound on the size of $R(A)\cdot R(A)$. Our first result is as follows.
\bigskip
\begin{theorem}\label{thm1}
 Let   $A\subset \mathbb{F}_p.$ If  $|A|\le c p^{\frac{12}{19}}$ for some small constant $c>0$,  then 
\[|R(A)\cdot R(A)|\gg |A|^{\frac{7}{2}+\frac{1}{12}}.\]
\end{theorem}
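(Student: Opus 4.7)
The plan is to estimate $|R(A) \cdot R(A)|$ from below by bounding the multiplicative energy of $R(A)$ from above via Cauchy--Schwarz, with the energy bound itself coming from a point-plane incidence argument in the spirit of Rudnev's theorem.

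First I would note that $|R(A)| \gg |A|^3$: any triple $(a_{11},a_{12},a_{21}) \in A^3$ with $a_{11}\neq 0$ determines a unique matrix in $R(A)$ via $a_{22}=(1+a_{12}a_{21})/a_{11}$, and the matrices with $a_{11}=0$ form a set of size $O(|A|^2)$, which is negligible.

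Second, applying Cauchy--Schwarz to the product map $(M_1,M_2)\mapsto M_1M_2$ gives
\[
|R(A)\cdot R(A)| \;\ge\; \frac{|R(A)|^4}{E(R(A))}, \qquad E(R(A)) := \#\{(M_1,M_2,M_3,M_4)\in R(A)^4 : M_1M_2=M_3M_4\}.
\]
Since $|R(A)|^4 \gg |A|^{12}$, the theorem reduces to the upper bound $E(R(A)) \ll |A|^{101/12}$, which amounts to saving a factor of $|A|^{7/12}$ over the trivial estimate $E(R(A))\le |R(A)|^3\le |A|^9$.

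Third, to bound the energy, I parameterize each $M_i$ by the free triple $(a_i,b_i,c_i)\in A^3$, with the fourth entry $d_i=(1+b_ic_i)/a_i$ determined. The equation $M_1M_2 = M_3M_4$ then yields three independent scalar equations on these twelve variables. The most tractable is the $(1,1)$-entry equation
\[
a_1a_2+b_1c_2 \;=\; a_3a_4+b_3c_4,
\]
while the other two equations are trilinear in $A$ after the denominators from the $d_i$ are cleared. After freezing a suitable subset of the variables, the count of remaining solutions can be cast as an incidence count between a point set in $\mathbb{F}_p^3$ and a family of planes, and Rudnev's point-plane bound then supplies the target estimate on $E(R(A))$. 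The hypothesis $|A|\le c p^{12/19}$ is precisely what is needed to keep the resulting point set in the non-degenerate range ($|P|\ll p^2$) of Rudnev's theorem.

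The principal difficulty will be in the third step: choosing which variables to freeze and which pair of entry equations to combine so that (a) the resulting point/plane configuration has sizes yielding exactly the exponent $101/12$ on the energy, and (b) no line in $\mathbb{F}_p^3$ is overly rich with points, so the line-term in Rudnev's bound does not dominate. This careful bookkeeping is what dictates the numerology $|A|\le p^{12/19}$ and the final exponent $|R(A) \cdot R(A)|\gg|A|^{43/12}=|A|^{7/2+1/12}$.
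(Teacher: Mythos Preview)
Your high-level plan---Cauchy--Schwarz plus an energy bound coming from incidence geometry---matches the paper's architecture in spirit, but you are missing the structural idea that makes the argument close: a dichotomy on the size of the product set $|AA|$.

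The paper does \emph{not} bound $E(R(A))\ll|A|^{101/12}$ unconditionally. Instead it fixes a threshold $\epsilon$ (eventually $\epsilon=1/6$) and splits into two cases. If $|AA|\ge|A|^{1+\epsilon}$, then Lemma~\ref{m-dif1} gives $|AA+AA|\gg|A|^{3/2+\epsilon/2}$ and one reads off $|R(A)\cdot R(A)|\gg|AA+AA|\cdot|A|^2$ directly from the $(1,1)$-entry, with no energy argument at all. Only when $|AA|\le|A|^{1+\epsilon}$ does the Cauchy--Schwarz route kick in, and there the smallness of $|AA|$ is used essentially: after the $(1,1)$-entry equation $a_{11}b_{11}+a_{12}b_{21}=a'_{11}b'_{11}+a'_{12}b'_{21}$ is handled (at cost $|A|^{13/2}$, Lemma~\ref{lm2}), each of the remaining two entry equations reduces, for fixed $t$, to counting $|Q\cap(Q'-x)|$ for dilates $Q,Q'$ of $A$. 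The bound $|Q\cap(Q'-x)|\ll|A|^{-1/2}|AA|^{5/4}$ (Lemma~\ref{lm3}, proved via the Stevens--de~Zeeuw point-\emph{line} bound rather than Rudnev's point-plane theorem) is only nontrivial because $|AA|$ is assumed small. Multiplying out gives $T\ll|A|^{8+5\epsilon/2}$, and balancing the two cases at $\epsilon=1/6$ produces the exponent $43/12$ and the range $|A|\ll p^{12/19}$.

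So the concrete gap in your plan is the assumption that a single incidence application will deliver $E(R(A))\ll|A|^{101/12}$. In the paper that inequality is only obtained under $|AA|\ll|A|^{7/6}$; when $|AA|$ is large you need a separate, more elementary argument. Your phrase ``freezing a suitable subset of the variables and applying Rudnev'' hides exactly the place where the proof bifurcates, and as stated there is no indication of how a point-plane count alone would absorb the dependence on $|AA|$ that the paper's Lemma~\ref{lm3} makes explicit.
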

\bigskip

%Applying Theorem \ref{thm1}, we obtain the following new lower bound on the cardinality of the set $AA+AA$.
%\begin{theorem}\label{GoodAAAA}
%If $A\subset \mathbb F_p$ with $|A|\le p^{\frac{38}{59}}$, then
%$$ |AA+AA|:=|\{ab+cd\in \mathbb F_p: a,b,c,d \in A\}| \gg |A|^{\frac{3}{2}+\frac{1}{19}}.$$
%\end{theorem}
%\begin{proof}
%As we shall see in \eqref{eq3.5}, it follows that 
%$$|R(A)\cdot R(A)|\sim |AA+AA||A|^2.$$
%Combining this with Theorem \ref{thm1} gives the desirable result.
%\end{proof}
%Theorem \ref{GoodAAAA} implies that if $A\subset \mathbb F_p$ is a multiplicative subgroup with $|A|\le p^{38/59}$, then $|A+A|\gg |A|^{3/2+1/19},$ which partially improves the result that $|A+A|\gg \min\{p,~|A|^{3/2}\}$ due to Heath-Brown and Konyagin \cite{HK00}. Our theorem is also better than the result due to Roche-Newton, Rudnev, and Shkredov (\cite{RudnevRNSh}, Corollary 4) who proved that $|AA+AA|\gg \min\{K^{1/2}|A|^{3/2},~p\}$ if $|AA|=K|A|$ for some $K\ge 1.$ 

Let $\mathbb{F}_p$ be a prime field. For an integer $n\ge 1$, the Heisenberg group of degree $n$, denoted by 
$\mathbf{H}_n(\mathbb{F}_p),$ is defined by a set of the following matrices:
\[[\mathbf{x}, \mathbf{y}, z] :=  \begin{bmatrix} 1 & \mathbf{x} & z \\
\mathbf{0} & I_n & \mathbf{y^t} \\
0 & \mathbf{0} & 1
\end{bmatrix}
\]
where $\mathbf{x}, \mathbf{y} \in \mathbb{F}_p^n$, $z \in \mathbb{F}_p$, $\mathbf{y^t}$ denotes the column vector of $\mathbf{y}$, and $I_n$ is the $n \times n$ identity matrix. 
For $A, B, C\subset \mathbb{F}_p$, we define
\[
[A^n, B^n, C]:=\{[\mathbf{x}, \mathbf{y}, z]\colon \mathbf{x}\in A^n, \mathbf{y}\in B^n,\ z\in C\}.
\]
\medskip
A similar question in the setting of the Heisenberg group over prime fields has been recently investigated by Hegyv\'ari and Hennecart in \cite{HHH}, namely, they proved the following theorem.
\bigskip
\begin{theorem}[Hegyv\'ari-Hennecart, \cite{HHH}]
For every $\varepsilon>0,$ there exists a positive integer $n_0=n_0(\epsilon)$ such that if $n\ge n_0$, and $[A^n, B^n, C]\subseteq \mathbf{H}_n(\mathbb{F}_p)$ with \[
|[A^n, B^n, C]|>|\mathbf{H}_n((\mathbb{F}_p))|^{3/4+\varepsilon},
\] 
then there exists a non-trivial subgroup $G$ of $\mathbf{H}_n(\mathbb{F}_p)$ such that $[A^n, B^n, C]\cdot [A^n, B^n, C]$ contains at least $|[A^n, B^n, C]|/p$ cosets of $G$. 
\end{theorem}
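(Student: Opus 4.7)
The plan is to take $G$ to be the center $Z = \{[\mathbf{0}, \mathbf{0}, z] : z \in \mathbb{F}_p\}$ of $\mathbf{H}_n(\mathbb{F}_p)$, a non-trivial normal subgroup of order $p$. Cosets of $Z$ are indexed by pairs $(\mathbf{x}, \mathbf{y}) \in \mathbb{F}_p^{2n}$ via the abelianization $[\mathbf{x}, \mathbf{y}, z] \mapsto (\mathbf{x}, \mathbf{y})$. Writing $S := [A^n, B^n, C] \cdot [A^n, B^n, C]$ and expanding the Heisenberg product law, the coset indexed by $(\mathbf{x}, \mathbf{y})$ is fully contained in $S$ if and only if the set
\[
T_{\mathbf{x}, \mathbf{y}} := (C+C) + \bigl\{\mathbf{a}_1 \mathbf{b}_2^t : \mathbf{a}_1 \in A^n \cap (\mathbf{x} - A^n),\ \mathbf{b}_2 \in B^n \cap (\mathbf{y} - B^n)\bigr\}
\]
equals all of $\mathbb{F}_p$. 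The problem thus reduces to showing that this fullness property holds for at least $|A|^n|B|^n|C|/p$ pairs $(\mathbf{x}, \mathbf{y})$.

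The two-step strategy would then be: first, a Cauchy--Schwarz / additive energy argument to find a large collection of pairs in $(A+A)^n \times (B+B)^n$ on which the fibers $A_{\mathbf{x}} := A^n \cap (\mathbf{x}-A^n)$ and $B_{\mathbf{y}} := B^n \cap (\mathbf{y}-B^n)$ are both large (roughly $|A|^{2n}/|A+A|^n$ and $|B|^{2n}/|B+B|^n$); and second, a dot-product covering estimate in $\mathbb{F}_p^n$ of Hart--Iosevich--Solymosi type, which guarantees that whenever $|A_\mathbf{x}|\cdot|B_\mathbf{y}|$ exceeds a threshold scaling like $p^{n+1}/|C|^2$, the dot-product image $\{\mathbf{a}\cdot\mathbf{b}^t : \mathbf{a}\in A_\mathbf{x},\ \mathbf{b}\in B_\mathbf{y}\}$ has size at least $p - |C+C|+2$. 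Cauchy--Davenport in $\mathbb{F}_p$ then forces $T_{\mathbf{x}, \mathbf{y}} = \mathbb{F}_p$.

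To control the doubling constants $K_A := |A+A|/|A|$ and $K_B := |B+B|/|B|$ appearing in the energy step, I would dichotomize. When both are bounded by $p^{\varepsilon n/10}$, the averaging step produces $|A_\mathbf{x}||B_\mathbf{y}| \gtrsim (|A||B|/(K_A K_B))^n$ on a positive-proportion subset of pairs, and the hypothesis $|A|^n|B|^n|C| > p^{(2n+1)(3/4+\varepsilon)}$ together with $n \geq n_0(\varepsilon)$ pushes this above the required threshold. When, say, $K_A$ is large, one first invokes the Balog--Szemer\'edi--Gowers theorem to replace $A$ with a large subset of bounded doubling and re-runs the argument with that subset, the polynomial loss being absorbed by the $\varepsilon$-slack in the exponent.

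The main obstacle is calibrating losses through the chain Pl\"unnecke--Ruzsa $\to$ averaging $\to$ dot-product threshold $\to$ Cauchy--Davenport margin, and in particular the regime where $|C|$ is small: there Cauchy--Davenport leaves little slack and one needs $A_\mathbf{x}\cdot B_\mathbf{y}^t$ to cover nearly all of $\mathbb{F}_p$ rather than just a positive proportion. This is where the freedom $n \geq n_0(\varepsilon)$ becomes essential: the exponential slack $p^{O(\varepsilon n)}$ supplied by the hypothesis can be spent on a moment iteration of the dot-product estimate, upgrading ``constant fraction of $\mathbb{F}_p$'' to ``all but $O(|C|)$ elements of $\mathbb{F}_p$'', which is what ultimately makes the counting close.
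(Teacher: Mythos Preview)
This theorem is not proved in the present paper. It is quoted verbatim from Hegyv\'ari and Hennecart \cite{HHH} as background for the authors' own results (Theorems~\ref{thm3} and~\ref{thm4}) on the Heisenberg group of degree two; the paper supplies no argument for it whatsoever. Consequently there is no ``paper's own proof'' against which to compare your proposal.

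As for the proposal itself, the overall architecture---take $G$ to be the center $Z$, reduce to showing that $T_{\mathbf{x},\mathbf{y}}=\mathbb{F}_p$ for many pairs $(\mathbf{x},\mathbf{y})$, and feed a Fourier/dot-product covering bound with fibers $A_{\mathbf{x}},B_{\mathbf{y}}$ produced by an energy pigeonhole---is a plausible line of attack and indeed close in spirit to how such results are typically obtained. That said, what you have written is a plan rather than a proof: the phrases ``the two-step strategy would then be'' and ``I would dichotomize'' signal that the quantitative bookkeeping has not been carried out. Two places where the sketch is genuinely incomplete: (i) the large-doubling branch, where you invoke Balog--Szemer\'edi--Gowers on $A$, does not obviously help, since the theorem concerns the \emph{given} sets $A,B,C$ and passing to a dense subset $A'\subset A$ changes the brick $[A^n,B^n,C]$; you would need to argue that cosets produced from $[A'^n,B^n,C]^2$ already sit inside $[A^n,B^n,C]^2$ and that the count survives, which is fine, but the ``polynomial loss absorbed by the $\varepsilon$-slack'' claim needs the actual exponents; and (ii) the ``moment iteration of the dot-product estimate'' upgrading coverage from a constant fraction of $\mathbb{F}_p$ to all but $O(|C|)$ elements is asserted without any indication of what estimate is being iterated or why the iteration converges within the available slack. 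If you intend this as a self-contained proof, both points require explicit inequalities; if you intend it as a pointer to \cite{HHH}, a citation suffices and the sketch is unnecessary.
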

In a very recent paper, using results on sum-product estimates, Hegyv\'{a}ri and Hennecart \cite{HH} established some results in the case $n=1$. In particular, they proved that if $A\subset \mathbb{F}_p$ with $|A|\ge p^{1/2}$, then 
\[|[A, A, 0]\cdot [A, A, 0]|\gg \min\left\lbrace p^{1/2}|A|^{5/2}, p^{-1/2}|A|^4  \right\rbrace.\]
In the case when $|A|\le p^{2/3}$, they also showed that
\[|[A, A, 0]\cdot [A, A, 0]|\gg |A|^{\frac{7}{2}}.\]
In this paper, we also extend this result to the setting of Heisenberg group of degree two. For simplicity, we write  $[A^2, A^2, 0]^2$ and $[A^2, A^2, A]^2$ for the products $[A^2, A^2, 0]\cdot [A^2, A^2, 0]$ and $[A^2, A^2, A]\cdot [A^2, A^2, A],$  respectively. We have the following theorems. 
\bigskip
\begin{theorem}\label{thm3}
If $A\subset \mathbb{F}_p$ with $|A|\le p^{\frac{1}{2}}$,  then we have 
\[|[A^2, A^2, 0]^2|\gtrsim |A|^{\frac{11}{2}+\frac{25}{262}}.\]
\end{theorem}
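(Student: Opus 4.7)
The strategy is to decompose $[A^2, A^2, 0]^2_{\mathbf{H}_2}$ in terms of the degree-one product set $T := [A, A, 0]\cdot [A, A, 0]\subset \mathbf{H}_1(\mathbb{F}_p)$ already controlled by the Hegyv\'{a}ri--Hennecart estimate $|T|\gg |A|^{7/2}$ (which is available since $|A|\le p^{1/2}\le p^{2/3}$), and then to dichotomize on the additive doubling of $A$. The Heisenberg product rule yields
\[
[\mathbf{x}, \mathbf{y}, 0]\cdot [\mathbf{x}', \mathbf{y}', 0] = [\mathbf{x}+\mathbf{x}', \mathbf{y}+\mathbf{y}', x_1 y_1' + x_2 y_2'],
\]
and because the last coordinate splits coordinatewise in $n=2$, one obtains the identity
\[
|[A^2, A^2, 0]^2| = \sum_{\alpha_1, \alpha_2, \beta_1, \beta_2 \in A+A} |T_{\alpha_1, \beta_1} + T_{\alpha_2, \beta_2}|, \qquad T_{\alpha, \beta} := \{\gamma : (\alpha, \beta, \gamma)\in T\}.
\]

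Using $|U+V|\ge \max(|U|, |V|)$ for each summand and $\sum_{\alpha, \beta}|T_{\alpha, \beta}|= |T|\gg |A|^{7/2}$, one immediately gets
\[
|[A^2, A^2, 0]^2|\gg |A+A|^2\cdot |A|^{7/2},
\]
which exceeds $|A|^{11/2+25/262}$ whenever $|A+A|\gtrsim |A|^{1+25/524}$. It therefore remains to handle the complementary regime $|A+A|<|A|^{1+25/524}$, in which $A$ has very small additive doubling.

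In this small-doubling regime I would invoke a sharp sum-product type input available in the range $|A|\le p^{1/2}$ -- for instance the point-line incidence bound of Stevens--de Zeeuw, or the point-plane theorem of Rudnev -- to deduce that the typical fibre product $|A_\alpha \cdot A_\beta|$, with $A_\alpha := A\cap (\alpha-A)$, is substantially larger than $|A|^{1+o(1)}$. This extra multiplicative expansion is fed into the identity above through the energy-type lower bound
\[
|T_{\alpha_1, \beta_1}+T_{\alpha_2, \beta_2}|\ge \frac{|T_{\alpha_1, \beta_1}|^2|T_{\alpha_2, \beta_2}|^2}{E^+(T_{\alpha_1, \beta_1}, T_{\alpha_2, \beta_2})},
\]
and the total additive energy $\sum E^+(T_{\alpha_1, \beta_1}, T_{\alpha_2, \beta_2})$ is bounded from above by exploiting the product-set structure $T_{\alpha, \beta}=A_\alpha \cdot A_\beta$, again via Rudnev's incidence theorem applied to the corresponding collinear triples.

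The principal obstacle is the precise calibration of the two regimes so that they meet at exactly $|A|^{25/262}$: the threshold exponent $1+25/524$ must coincide with the gain afforded by the sum-product input in the small-doubling case, and extracting $25/262$ rather than some nearby fraction requires a delicate parameter optimization. A secondary technical point is that the fibres $T_{\alpha, \beta}$ have highly non-uniform sizes, so the Cauchy--Schwarz averaging needs to carry weights proportional to $|A_\alpha|\cdot |A_\beta|$ rather than being uniform over $(A+A)^2$, and losses at this step are what force the final exponent to carry the unusual denominator.
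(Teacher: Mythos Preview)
Your decomposition
\[
|[A^2,A^2,0]^2|=\sum_{\alpha_1,\alpha_2,\beta_1,\beta_2\in A+A}\bigl|A_{\alpha_1}A_{\beta_1}+A_{\alpha_2}A_{\beta_2}\bigr|
\]
is correct, and the large-doubling branch (bounding each summand below by $|A_{\alpha_1}A_{\beta_1}|$ and summing to $|A+A|^2\cdot |[A,A,0]^2|\gg |A+A|^2|A|^{7/2}$) is a clean and legitimate argument. This is \emph{not} the route the paper takes: the paper never invokes the $n=1$ Hegyv\'ari--Hennecart bound, and instead runs Cauchy--Schwarz on the collision count $N$, proving via Rudnev's point--plane theorem (their Theorem~3.2) that $N\lesssim |A|^3E^+(A)^{5/2}+|A|^{10}$, then dichotomising on $E^+(A)$ and using Balog--Szemer\'edi--Gowers together with the sum--product estimate $|A-A|^{18}|AA|^9\gtrsim|A|^{32}$ in the high-energy case. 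The specific constant $25/262$ arises from balancing $\tfrac{5\epsilon}{2}$ against the BSG loss, giving $\epsilon=5/131$.

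The gap in your proposal is the small-doubling branch. You gesture at an energy inequality for $|T_{\alpha_1,\beta_1}+T_{\alpha_2,\beta_2}|$ and a Rudnev-type bound on $\sum E^+(T_{\alpha_1,\beta_1},T_{\alpha_2,\beta_2})$, but neither step is made precise, and you concede that the calibration producing exactly $25/262$ is the ``principal obstacle''. As written this half of the argument is an outline, not a proof.

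It is worth noting that your small-doubling case can be finished far more simply than you suggest, and with a better exponent. If $|A+A|\le|A|^{1+\epsilon}$ then the sum--product lemma $|A\pm A|^{18}|AA|^9\gtrsim|A|^{32}$ gives $|AA|\gtrsim|A|^{14/9-2\epsilon}$, whence $|AA+AA|\gg|A|\,|AA|^{1/2}\gtrsim|A|^{16/9-\epsilon}$ and the trivial bound $|[A^2,A^2,0]^2|\ge|A|^4|AA+AA|$ already yields $|A|^{52/9-\epsilon}$. Balancing this against your $|A|^{11/2+2\epsilon}$ gives $\epsilon=5/54$ and final exponent $\tfrac{11}{2}+\tfrac{5}{27}$, which is strictly larger than the paper's $\tfrac{11}{2}+\tfrac{25}{262}$. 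So your fibrewise framework, once the vague half is replaced by this direct estimate, actually beats the stated theorem; the paper's more elaborate second-moment machinery is paying a BSG penalty that your dichotomy on $|A+A|$ rather than on $E^+(A)$ avoids.
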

\bigskip
\begin{theorem}\label{thm4} Let $A\subset \mathbb{F}_p$ with $|A|\le p^{\frac{9}{16}}.$ 
 Then we have 
\[|[A^2, A^2, A]^2|\gtrsim |A|^{\frac{11}{2}+\frac{23}{90}}.\]
\end{theorem}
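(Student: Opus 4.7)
The plan is to follow the proof of Theorem \ref{thm3} while exploiting the additional freedom introduced by allowing $z\in A$ in the third coordinate. The Heisenberg multiplication
$$[\mathbf{x}_1,\mathbf{y}_1,z_1]\cdot [\mathbf{x}_2,\mathbf{y}_2,z_2] = [\mathbf{x}_1+\mathbf{x}_2,\mathbf{y}_1+\mathbf{y}_2,z_1+z_2+\mathbf{x}_1\cdot\mathbf{y}_2^t],$$
together with the centrality of $\{[\mathbf{0},\mathbf{0},z]:z\in\mathbb{F}_p\}$, implies that, in the coordinate representation in $\mathbb{F}_p^5$,
$$[A^2,A^2,A]^2 = [A^2,A^2,0]^2 + \{(\mathbf{0},\mathbf{0},s) : s\in A+A\},$$
so the first four coordinates coincide with those of $[A^2,A^2,0]^2$ while the fifth is shifted by a free element of $A+A$.

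I would first apply Cauchy--Schwarz to the representation function of $[A^2,A^2,A]^2$: if $r(\mathbf{u},\mathbf{v},w)$ counts representations of $(\mathbf{u},\mathbf{v},w)$ as a product of two elements of $[A^2,A^2,A]$, then $\sum r = |A|^{10}$, and hence
$$|[A^2,A^2,A]^2| \ge \frac{|A|^{20}}{E}, \qquad E := \sum r^2.$$
After freezing the sum constraints on the $\mathbf{x}$- and $\mathbf{y}$-variables, the remaining scalar identity
$$z_1+z_2-z_1'-z_2' = \mathbf{x}_1'\cdot\mathbf{y}_2'^t - \mathbf{x}_1\cdot\mathbf{y}_2^t$$
decouples the $z$'s from the $\mathbf{x}\mathbf{y}$'s through a single parameter $\tau$, yielding the convolution decomposition
$$E = \sum_{\tau\in\mathbb{F}_p} F(\tau)\, r_A^{(4)}(\tau),$$
where $F(\tau)$ is the ``product energy'' counting quadruples $(\mathbf{x}_1,\mathbf{x}_1',\mathbf{y}_2,\mathbf{y}_2')$ satisfying the sum constraints with $\mathbf{x}_1'\cdot\mathbf{y}_2'^t - \mathbf{x}_1\cdot\mathbf{y}_2^t = \tau$, and $r_A^{(4)}(\tau) = \#\{(z_1,z_2,z_1',z_2')\in A^4 : z_1+z_2-z_1'-z_2' = \tau\}$.

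The quantity $F(0)$ is precisely the energy controlled in the proof of Theorem \ref{thm3} via Rudnev's point--plane incidence bound. The key step is therefore to obtain a pointwise (or suitably averaged) bound on $F(\tau)$ uniform in $\tau$, and then combine it via H\"older's inequality with the appropriate $\ell^p$-norm of $r_A^{(4)}$. The improvement over Theorem \ref{thm3} comes from the fact that the total mass $|A|^4 = \sum_\tau r_A^{(4)}(\tau)$ is spread over $\sim|A+A-A-A|\gg |A|$ distinct values of $\tau$ rather than being concentrated at the origin; this averaging, together with the hypothesis $|A|\le p^{9/16}$ which keeps the incidence bound in its effective regime, should push the exponent from $\tfrac{11}{2}+\tfrac{25}{262}$ up to $\tfrac{11}{2}+\tfrac{23}{90}$. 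The main obstacle is to produce the required bound on $F(\tau)$ away from $\tau=0$ (the arguments for Theorem \ref{thm3} are tailored to $\tau = 0$), and to carry out the multi-parameter optimisation balancing the incidence input against $|A+A-A-A|$ and the higher moment $\sum_\tau r_A^{(4)}(\tau)^2$, in order to extract exactly the fraction $\tfrac{23}{90}$.
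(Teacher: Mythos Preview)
Your proposal is not a proof but a programme, and one that explicitly leaves the two decisive steps unresolved: you do not produce the uniform bound on $F(\tau)$ for $\tau\ne 0$, and you do not carry out the optimisation that would pin down the exponent $\tfrac{23}{90}$. As it stands there is no reason to expect that exactly $\tfrac{23}{90}$ would emerge from this energy decomposition; you are relying on a numerical coincidence that you have not verified. There is also a small slip in the description of $F(\tau)$: it does not merely count quadruples $(\mathbf{x}_1,\mathbf{x}_1',\mathbf{y}_2,\mathbf{y}_2')$, since the sum constraints couple these with $\mathbf{x}_2,\mathbf{x}_2',\mathbf{y}_1,\mathbf{y}_1'$ as well, so the combinatorics you would need to control is an $8$-tuple (of vectors in $A^2$) count, not a $4$-tuple count.

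More importantly, the paper's proof is entirely different and much simpler; none of the incidence/energy machinery of Theorem~\ref{thm3} is used. The argument is: first observe directly that
\[
|[A^2,A^2,A]^2|\ge |A|^4\,|AA+AA+A+A|,
\]
by fixing representatives for the last coordinate and letting the free $\mathbf{x}_2,\mathbf{y}_1$ range over $A^2$. Then one bounds $|AA+AA+A+A|$ in two steps using only Lemma~\ref{m-dif1} and Lemma~\ref{m-dif2}. A threshold argument on $|A+A|$ (with threshold $|A|^{1+\epsilon}$, $\epsilon=\tfrac{1}{45}$) gives $|AA+A+A|\gtrsim |A|^{3/2+1/90}$: if $|A+A|$ is large, apply Lemma~\ref{m-dif1} with $X=A+A$; if $|A+A|$ is small, Lemma~\ref{m-dif2} forces $|AA|$ to be large. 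One more application of Lemma~\ref{m-dif1} with $X=AA+A+A$ then yields $|AA+AA+A+A|\gtrsim |A|^{7/4+1/180}=|A|^{79/45}$, and $4+\tfrac{79}{45}=\tfrac{11}{2}+\tfrac{23}{90}$. So the exponent $\tfrac{23}{90}$ arises from elementary sum--product input, not from balancing an incidence bound against moments of $r_A^{(4)}$.
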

The rest of this paper is organized to provide the complete proofs of our main theorems. More precisely, in Section $2$ we give the proof of Theorem \ref{thm1}, and   in Section $3$ we complete proofs of Theorems \ref{thm3} and \ref{thm4}.
\section{Proof of Theorem \ref{thm1}}
In this section, without loss of generality, we assume that $0\not\in A$.  To prove Theorem \ref{thm1}, we need the following lemmas. 
\bigskip
\begin{lemma}[\cite{pham}, Corollary $3.1$]\label{m-dif1}
Let $X, A\subset \mathbb{F}_p$ with $|X|\ge |A|$. Then we have
\[|X+ A\cdot A|\gg \min\left\lbrace |X|^{1/2}|A|, p\right\rbrace.\]
\end{lemma}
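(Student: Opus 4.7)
The plan is to convert the bound on $|X + A\cdot A|$ into a point-line incidence count in $\mathbb{F}_p^2$ and appeal to a Szemer\'edi--Trotter type inequality valid for Cartesian-product point sets over prime fields (itself a consequence of Rudnev's point-plane incidence theorem).

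Write $Q := X + A \cdot A$ and, assuming $0 \notin A$ without loss of generality, set $P := X \times A \subset \mathbb{F}_p^2$. For each $(a,y) \in A \times Q$ define the line
\[
\ell_{a,y} := \{(u,v) \in \mathbb{F}_p^2 : u + av = y\},
\]
and let $L := \{\ell_{a,y} : (a,y) \in A \times Q\}$. Since $\ell_{a,y}$ has slope $-a$ and $u$-intercept $y$, distinct pairs $(a,y)$ produce distinct lines, so $|L| = |A||Q|$ and $|P| = |X||A|$. For each triple $(x,a,b) \in X \times A \times A$, set $y := x + ab \in Q$; then $(x,b) \in \ell_{a,y}$, and the triple is recoverable from the incidence, whence
\[
I(P,L) \geq |X||A|^2.
\]

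I would then invoke an incidence inequality of the form
\[
I(P,L) \ll (|P||L|)^{2/3} + |P| + |L|,
\]
available for $P = X \times A$ and any set $L$ of lines, provided $|P|,|L|$ remain below the relevant threshold in $p$. Comparing with the lower bound and noting that the linear error terms are subordinate to the main term (if $|L|$ dominates, then $|Q| \gg |X||A|$, which is stronger than the target; if $|P|$ dominates, one gets $|A| \ll 1$), we obtain
\[
|X||A|^2 \ll \bigl(|X||A|^2 |Q|\bigr)^{2/3},
\]
so $|Q| \gg |X|^{1/2}|A|$. When the $p$-threshold is exceeded, $|Q|$ is already $\gtrsim p$, and the minimum with $p$ in the statement takes over; observe that this threshold regime is precisely $|X|^{1/2}|A| \gtrsim p$, which is consistent with the form of the statement.

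The main obstacle is securing the exponent $2/3$ in the incidence inequality over $\mathbb{F}_p$. For a generic point set in $\mathbb{F}_p^2$ only the Stevens--de Zeeuw bound with exponent $11/15$ is known, and it would yield only the strictly weaker estimate $|Q| \gg (|X||A|^2)^{4/11}$. The sharp $2/3$ is recovered here because $P = X \times A$ has Cartesian-product structure: one recasts the line-incidence count as a point-plane incidence problem in $\mathbb{F}_p^3$ (lifting the slope parameter to an auxiliary coordinate) and applies Rudnev's point-plane theorem, which gives the correct exponent in the regime $|X|^{1/2}|A| \lesssim p$. Verifying this quantitative threshold carefully, and checking that the rich lines of the Rudnev bound do not contribute to the incidence count in the relevant parameter range, is the technical heart of the argument.
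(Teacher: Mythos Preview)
The paper does not prove this lemma; it is quoted from \cite{pham} (Pham--Vinh--de~Zeeuw) without argument, so there is no in-paper proof to compare against directly. Your overall instinct---that the bound ultimately rests on Rudnev's point--plane theorem---matches the proof in the cited source.

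Where your sketch is shaky is the intermediate point--line step. The bound $I(P,L)\ll(|P||L|)^{2/3}$ is \emph{not} available over $\mathbb{F}_p$ as a black box, even for Cartesian point sets; Stevens--de~Zeeuw (Lemma~\ref{lienthuoc} here) is the best general result, and as you acknowledge it is too weak. Your proposed fix, ``lifting the slope parameter to an auxiliary coordinate,'' does not work as described: the incidence relation $x+ab=y$ becomes quadratic in the lifted coordinates $(x,b,a)$, so it no longer defines a plane; and in any case $I(P,L)$ is only a four-parameter count, whereas Rudnev needs a six-parameter bilinear relation.

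What is missing is a Cauchy--Schwarz step to pass to energy. The argument in \cite{pham} bounds $|Q|$ via
\[
|Q|\cdot E \ge (|X||A|^2)^2,\qquad E=\bigl|\{(x,a,b,x',a',b'): x+ab=x'+a'b'\}\bigr|,
\]
and then reads $E$ directly as a point--plane incidence count in $\mathbb{F}_p^3$ (points $(x,a,b')\in X\times A\times A$, planes $U+bV-a'W=x'$ indexed by $(x',b,a')$). Rudnev gives $E\ll |X|^2|A|^4/p+(|X||A|^2)^{3/2}$, and the claimed bound on $|Q|$ follows. Your line-incidence setup reaches the same point once you apply Cauchy--Schwarz over $y\in Q$ to convert $I(P,L)=\sum_y R(y)$ into the second moment $\sum_y R(y)^2=E$; the hypothetical $2/3$ line bound is then neither needed nor actually provable in the form you stated.
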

\bigskip
\begin{lemma}%[\cite{RudnevRNSh}]
\label{lm2}  Let $A\subset \mathbb{F}_p$  with $|A|\le c p^{2/3}$ for a sufficiently small $c>0.$ Then the number of tuples $(a_1, a_2, a_3, a_4, a_1', a_2', a_3', a_4')\in A^8$ satisfying 
\[a_1a_2+a_3a_4=a_1'a_2'+a_3'a_4'\]
is  $\ll |A|^{13/2}$ 
\end{lemma}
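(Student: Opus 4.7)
The plan is to reduce the count to a point-plane incidence problem in $\mathbb{F}_p^3$ and apply Rudnev's theorem. Rewrite the equation as
$$a_1 \cdot a_2 + a_3 \cdot a_4 - a_3' \cdot a_4' = a_1' a_2'.$$
For each fixed quintuple $(a_1, a_3, a_3', a_1', a_2') \in A^5$, this is a genuine plane equation in the variables $(a_2, a_4, a_4') \in \mathbb{F}_p^3$ (note that $0 \notin A$, so the coefficient vector $(a_1, a_3, -a_3')$ is nonzero). Define the point set
$$P := A \times A \times A \subset \mathbb{F}_p^3, \quad |P| = |A|^3,$$
and the multiset of $|A|^5$ planes
$$\mathcal{P} := \bigl\{ \Pi_{a_1, a_3, a_3', a_1', a_2'} : (a_1, a_3, a_3', a_1', a_2') \in A^5 \bigr\},$$
where $\Pi_{a_1, a_3, a_3', a_1', a_2'}$ is the plane $a_1 x + a_3 y - a_3' z = a_1' a_2'$. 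Under this correspondence, the number of $8$-tuples satisfying the equation is precisely the (weighted) incidence count $I(P, \mathcal{P})$.

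To apply Rudnev's theorem I need to check (i) $|P| \le |\mathcal{P}|$, which is clear since $|A|^3 \le |A|^5$, and (ii) a bound on the maximum number $k$ of collinear points of $P$. Because $P = A^3$, any line in $\mathbb{F}_p^3$ contains at most $|A|$ points of $P$: each coordinate of a point on the line is an affine function of a single parameter $t$, and any such nonconstant function takes values in $A$ for at most $|A|$ choices of $t$. Hence $k \le |A|$.

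Now apply Rudnev's point-plane incidence bound
$$I(P, \mathcal{P}) \ll \frac{|P|\,|\mathcal{P}|}{p} + |P|^{1/2}\,|\mathcal{P}| + k\,|\mathcal{P}|,$$
which after substitution gives
$$E \ll \frac{|A|^8}{p} + |A|^{3/2} \cdot |A|^5 + |A| \cdot |A|^5 = \frac{|A|^8}{p} + |A|^{13/2} + |A|^6.$$
Under the hypothesis $|A| \le c p^{2/3}$ with $c$ sufficiently small, the first term equals $c^{3/2} |A|^{13/2}$ and is absorbed; the third term is trivially smaller. This yields $E \ll |A|^{13/2}$, as required.

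The main obstacle is recognizing the correct partition of the eight variables into a three-variable point block and a five-variable plane block. Because each product $a_i a_j$ in the equation is bilinear, the point block must contain at most one of the two factors in each product in order for the resulting equation to be linear in the point coordinates; the choice $(a_2, a_4, a_4')$ accomplishes this. Beyond that the proof is routine: the collinearity bound $k \le |A|$ is immediate, and the threshold $|A| \le c p^{2/3}$ in the hypothesis is exactly the one needed to force the ``random'' error term $|P||\mathcal{P}|/p$ in Rudnev's bound to be dominated by $|P|^{1/2}|\mathcal{P}| = |A|^{13/2}$.
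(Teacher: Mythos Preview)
Your route is different from the paper's and more direct. The paper fixes $(a_4,a_4')\in A^2$, quotes the bound $\ll |A|^{9/2}$ from \cite{RudnevRNSh} on the number of solutions to $a_1a_2+\lambda a_3=a_1'a_2'+\beta a_3'$ with $\lambda,\beta$ fixed nonzero constants, and then sums over the $|A|^2$ choices of $(a_4,a_4')$. You instead apply Rudnev's point--plane bound in one shot to all eight variables. Since the cited six-variable estimate is itself proved via Rudnev's theorem, your argument is essentially that proof unrolled, with the outer sum over $(a_4,a_4')$ absorbed into the plane parametrisation.

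There is one technical point you should not skip. Your plane family $\mathcal P$ is a genuine \emph{multiset}: distinct quintuples $(a_1,a_3,a_3',a_1',a_2')$ can define the same plane, both because $(a_1',a_2')$ enters only through the product $a_1'a_2'$ and because proportional triples $(a_1,a_3,a_3')$ yield the same normal direction. Theorem~\ref{point-plane}, as stated, is for a \emph{set} of planes, and the passage to multisets is not automatic. This is precisely what freezing $(a_4,a_4')$ buys in the paper's approach: once two of the plane coefficients are fixed nonzero constants, one can arrange a $3$--$3$ split in which each plane has a constant entry in its coefficient vector, pinning down the projective scaling and making the parametrisation injective. In your $3$--$5$ split every coefficient lies in $A$, so that device is unavailable. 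Your argument is easily completed by either invoking the multiset version of Rudnev's theorem (which is available in the literature) or freezing one further variable before applying the bound, but you should say explicitly which you are doing.
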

\begin{proof}
For $\lambda, \beta\in \mathbb{F}_p\setminus\{0\}$, one can follow the proof of \cite[Theorem $3$]{RudnevRNSh} to prove that the number of tuples $(a_1, a_2, a_3, a_1', a_2', a_3')\in A^6$ such that 
\[a_1a_2+\lambda a_3=a_1'a_2'+\beta a_3'\]
is $\ll |A|^{9/2}$. 
Thus we see that  for each fixed pair $(a_4, a_4')\in A^2$ the number of tuples $(a_1, a_2, a_3, a_4, a_1', a_2', a_3', a_4')\in A^8$ satisfying 
\[a_1a_2+a_3a_4=a_1'a_2'+a_3'a_4'\]
is  $\ll |A|^{9/2}$. 
Taking the sum over all pairs $(a_4, a_4')\in A^2$, the lemma follows. 

\end{proof}
\bigskip
\begin{lemma}[\cite{frank}, Theorem 4]\label{lienthuoc}
Let $A, B\subset \mathbb{F}_p$ with $|A|\le |B|,$ and let $L$ be a finite set of lines  in $\mathbb{F}_p^2.$ Suppose that
$|A||B|^2\le |L|^3$ and $|A| |L|\ll p^2.$ Then the number of incidences between $A\times B$ and lines in $L$, denoted by $I(A\times B, L)$, satisfies 
\[I(A\times B, L)\ll |A|^{3/4}|B|^{1/2}|L|^{3/4}+|L|.\]
\end{lemma}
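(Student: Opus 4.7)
My plan is to deduce this Cartesian-product point-line incidence bound from Rudnev's point-plane incidence theorem in $\mathbb{F}_p^3$, following the Stevens--de Zeeuw framework. The starting point is a dyadic pigeonhole on line-richness: it suffices to bound $\tau|L_\tau|$ for each dyadic $\tau \ge 2$, where $L_\tau \subseteq L$ is the subfamily of lines meeting $A\times B$ in between $\tau$ and $2\tau$ points; lines with at most one incidence contribute only $O(|L|)$, giving the additive error term in the conclusion.

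I would then pass from $\tau|L_\tau|$ to the richer second-moment statistic
\[
T := \sum_{\ell \in L_\tau} |\ell \cap (A\times B)|^2 \ \gtrsim \ |L_\tau|\tau^2,
\]
and aim for an upper bound of the form $T \ll |A|^{3/2}|B||L_\tau|^{1/2}$, from which $\tau^2 \ll |A|^{3/2}|B||L_\tau|^{-1/2}$ and hence $\tau|L_\tau| \ll |A|^{3/4}|B|^{1/2}|L_\tau|^{3/4}$ follow by elementary manipulations; summing dyadically over $\tau$ yields the claimed bound (with the additive $|L|$ picking up the contribution of $\tau \le 1$).

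The upper bound on $T$ is where Rudnev's theorem enters. Parametrize $\ell \in L_\tau$ by slope-intercept $(m, c)$ (verticals are handled separately, losing only a constant factor): a pair of points $(a, b), (a', b') \in A \times B$ with $a \ne a'$ lies on $\ell$ iff $b - b' = m(a - a')$ and $c = b - ma$. Treating $(a, a', b) \in A \times A \times B$ as points of a set $\mathcal{P} \subset \mathbb{F}_p^3$ and encoding each pair $(\ell, b') = ((m, c), b')$ with $b' \in B$ as the plane $\pi_{\ell, b'}: b = b' + m(a - a')$, one recasts $T$ as a point-plane incidence count (modulo a standard weighting trick that restricts to genuine lines of $L_\tau$ rather than the spurious lines the raw $(m, b')$-parametrization would see). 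Rudnev's theorem $I(\mathcal{P}, \Pi) \ll |\mathcal{P}|^{1/2}|\Pi| + k|\Pi|$, applied with the Cartesian-product bound $k \le \max(|A|, |B|)$ on rich lines and with the error term $|\mathcal{P}||\Pi|/p$ suppressed by $|A||L|\ll p^2$, delivers the desired estimate once the sizes $|\mathcal{P}| \le |A|^2|B|$ and $|\Pi| \le |L_\tau||B|$ are inserted and the hypothesis $|A||B|^2 \le |L|^3$ is used to guarantee that the first Rudnev term dominates the $k|\Pi|$ error.

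The main technical obstacle is the bookkeeping of this encoding: one must verify that the weighting of the planes really picks out only lines of $L_\tau$, that Rudnev's orientation hypothesis $|\mathcal{P}| \le |\Pi|$ is in force in the relevant regime (possibly requiring a swap of roles in some sub-regime), and that the two auxiliary hypotheses $|A||B|^2 \le |L|^3$ and $|A||L| \ll p^2$ are used precisely to defeat the two error terms of Rudnev's theorem in turn. Beyond these calibrations the proof is essentially mechanical algebra followed by dyadic summation.
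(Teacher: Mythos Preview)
The paper does not prove this lemma; it is quoted as Theorem~4 of Stevens--de~Zeeuw. Your high-level plan (dyadic pigeonholing, a Cauchy--Schwarz step, then Rudnev's point--plane theorem) is the Stevens--de~Zeeuw framework, but the specific encoding you propose has a genuine gap. Your plane $\pi_{\ell,b'}\colon b=b'+m(a-a')$ depends only on the slope $m$ of $\ell$ and on $b'$, not on the intercept; the incidence count between $\mathcal P=A\times A\times B$ and your multiset of planes is therefore not $T$ but the strictly larger quantity $\sum_{\ell\in L_\tau}\#\{(a,a',b,b')\in A^2\times B^2: b-b'=m_\ell(a-a')\}$. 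Rudnev applied with $|\mathcal P|=|A|^2|B|$ and $|\Pi|=|L_\tau||B|$ yields at best $|A||B|^{3/2}|L_\tau|$ or $|A|^2|B|^{3/2}|L_\tau|^{1/2}$, neither of which is the $|A|^{3/2}|B||L_\tau|^{1/2}$ you assert, and no ``weighting trick'' recovers the discarded intercept information.

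The repair is to apply Cauchy--Schwarz over the $B$-coordinate rather than over the lines: writing $I=\#\{(a,b,\ell):sa+t=b\}$ one has $I^2\le|B|\cdot W$ with $W=\#\{(a,a',\ell,\ell')\in A^2\times L^2:\ sa+t=s'a'+t'\}$. This $W$ is \emph{exactly} a point--plane incidence count, with points $(a,s',t')\in A\times L\subset\mathbb{F}_p^3$ and planes $s\cdot a-a'\cdot s'-t'=-t$ indexed by $(a',s,t)\in A\times L$. Rudnev now gives $W\ll(|A||L|)^{3/2}$ once the hypotheses $|A||L|\ll p^2$ and $|A|\le|L|$ (the latter following from $|A|\le|B|$ and $|A||B|^2\le|L|^3$) dispose of the lower-order terms, whence $I\ll|A|^{3/4}|B|^{1/2}|L|^{3/4}$.
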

\bigskip
The following is an improvement of Lemma $23$ in \cite{s-ds}.
\begin{lemma}\label{lm3}
Let $A, B\subset \mathbb{F}_p$. Then if $|A|=|B|$, and $|A|^2|AB|\ll p^2$,  we have 
\[|A\cap (B+x)|\ll |A|^{-1/2}|AB|^{5/4},\]
for any $x\ne 0$.
\end{lemma}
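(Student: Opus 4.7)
The plan is to bound $t := |A\cap(B+x)|$ by combining the incidence estimate Lemma \ref{lienthuoc} with the multiplicative-energy control afforded by the Rudnev-type 6-variable bound underlying Lemma \ref{lm2}. Set $S = A\cap(B+x)$ and assume $0\notin A$ (harmless). For each $s\in S$, the element $b_s := s-x$ lies in $B$, and for each $c\in A$ the point $(c,\,(s-x)c)$ lies in $A\times AB$ on the line $m_s : Y = (s-x)X$ through the origin, since $(s-x)c = b_s c \in B\cdot A = AB$. The map $(s,c)\mapsto(c,(s-x)c)$ is injective (using $x\ne 0$ and $0\notin A$), so the point set $P = \{(c,(s-x)c) : s\in S,\, c\in A\}$ has exactly $t|A|$ elements, and each of the $t$ distinct lines $\{m_s : s\in S\}$ (distinct slopes $s-x$) carries exactly $|A|$ of them, giving $I(P,\{m_s\}) = t|A|$ exactly.

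Next I would apply Lemma \ref{lienthuoc} with $A' = A$, $B' = AB$ (so $|A'|\le|B'|$), and $L = \{m_s : s\in S\}$ of cardinality $t$. The hypothesis $|A|^2|AB|\ll p^2$, together with the trivial $t\le|A|$, secures the side condition $|A'|\cdot|L|\le|A|^2\ll p^2$. Provided the main condition $|A|\cdot|AB|^2 \le t^3$ holds (the complementary regime forces $t\le(|A||AB|^2)^{1/3}$ directly), the lemma combined with $I(P,L) = t|A|$ gives
\[t|A| \;\le\; I(A\times AB, L)\;\ll\; |A|^{3/4}|AB|^{1/2}t^{3/4}+t,\]
which yields a first-tier upper bound on $t$. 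To sharpen this, I would control the multiplicative energy
\[E_\times(B_x,A) := \#\{(b_1,c_1,b_2,c_2)\in B_x\times A\times B_x\times A : b_1 c_1 = b_2 c_2\},\]
where $B_x := S-x\subseteq B$ has size $t$. Writing $s_i = b_i + x\in S\subseteq A$ rewrites the defining equation as $s_1 c_1 + x c_2 = s_2 c_2 + x c_1$, which is an instance of the 6-variable equation $a_1 a_2 + \lambda a_3 = a'_1 a'_2 + \beta a'_3$ appearing in the proof of Lemma \ref{lm2} (take $\lambda = \beta = x$, $a_3 = c_2 = a'_2$, $a'_3 = c_1 = a_2$). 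The restricted count is dominated by the unrestricted 6-variable count, so $E_\times(B_x,A)\ll |A|^{9/2}$, and Cauchy--Schwarz yields
\[(t|A|)^2 \;\le\; |B_x A|\cdot E_\times(B_x,A) \;\le\; |AB|\cdot|A|^{9/2},\]
a second upper bound on $t$.

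The main obstacle is the final step: combining these two estimates to recover exactly the exponents $|A|^{-1/2}|AB|^{5/4}$ claimed in the lemma. A direct application of either alone produces only $t\ll |AB|^2/|A|$ from the incidence bound and $t\ll |A|^{5/4}|AB|^{1/2}$ from Cauchy--Schwarz, and the desired estimate is not a straightforward weighted geometric mean of the two. I expect the sharp bound to emerge from a careful dichotomy on the size of $t$ relative to a threshold such as $|AB|/|A|^{1/2}$, perhaps assisted by Lemma \ref{m-dif1} applied to an auxiliary product-sumset in order to convert the $|A|^{5/4}$ loss in the energy bound into an $|AB|^{3/4}$ gain. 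This delicate balancing is precisely what one expects to distinguish the improved Lemma \ref{lm3} from Lemma 23 in \cite{s-ds}.
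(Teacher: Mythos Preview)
Your proposal does not reach the stated bound, and the difficulty you flag at the end is real: the two inequalities you derive,
\[
t\ll |AB|^2/|A| \qquad\text{and}\qquad t\ll |A|^{5/4}|AB|^{1/2},
\]
do not combine (by any weighted mean or dichotomy) to give $t\ll |A|^{-1/2}|AB|^{5/4}$. Writing $|AB|=|A|^{1+\epsilon}$, the target is $|A|^{3/4+5\epsilon/4}$ while your two bounds read $|A|^{1+2\epsilon}$ and $|A|^{7/4+\epsilon/2}$; for every $\epsilon\ge 0$ both exceed the target, so no averaging helps. The source of the loss is your choice of line set: you use only the $t$ lines $Y=(s-x)X$, so Lemma~\ref{lienthuoc} is being applied in a regime where the line set is small and the bound it returns is correspondingly weak.

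The paper's argument bypasses this by \emph{amplifying} before passing to incidences. Observe that for each $c\in A\cap(B+x)$ and each pair $(a,b)\in A\times B$ one has $u=cb\in AB$, $u_*=(c-x)a\in AB$, and $ub^{-1}-u_*a^{-1}=x$; hence
\[
|A\cap(B+x)|\le \frac{1}{|A||B|}\,\bigl|\{(u,u_*,a,b)\in AB\times AB\times A\times B:\ ub^{-1}-u_*a^{-1}=x\}\bigr|.
\]
The right-hand side counts incidences between the point set $A^{-1}\times AB$ and the line set $L=\{b^{-1}Y-u_*X=x:\ b\in B,\ u_*\in AB\}$, which has $|L|=|B|\,|AB|$. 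Lemma~\ref{lienthuoc} (whose side condition $|A|\,|L|\ll p^2$ is exactly the hypothesis $|A|^2|AB|\ll p^2$) then gives
\[
I(A^{-1}\times AB,\,L)\ll |A|^{3/4}|AB|^{1/2}\bigl(|B|\,|AB|\bigr)^{3/4}=|A|^{3/2}|AB|^{5/4},
\]
and dividing by $|A||B|=|A|^2$ yields the claim directly. No energy argument or dichotomy is needed; the entire improvement over your approach comes from working with a line set of size $|A|\,|AB|$ rather than $t$.
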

\begin{proof}
It is clear that 
\[|A\cap (B+x)|\ll \frac{1}{|A||B|}\left\vert \left\lbrace (u, u_*, a, b)\in AB\times AB\times A\times B\colon ub^{-1}-u_*a^{-1}=x \right\rbrace\right\vert.\]
The number of such tuples $(u, u_*, a, b)$ is bounded by the number of incidences between points in $A^{-1}\times AB$ and a set $L$ of lines of the form $b^{-1}Y-u_*X=x$ with $b\in B$ and $u_*\in AB$. Notice that $|A|=|A^{-1}|$ and $|L|=|B||AB|.$ Thus if $|A|=|B|$ and $|A|^2|AB|\ll p^2$, Lemma \ref{lienthuoc} implies that 
\[I(A^{-1}\times AB,~ L)\ll |A|^{3/2}|AB|^{5/4},\]
which completes the proof of the theorem. 
\end{proof}
\bigskip
\begin{lemma}[\cite{m}, Theorem 2]\label{m-dif2}
If  $A \subset \mathbb{F}_p$ with $|A|\le p^{9/16}$, then we have
\[|A\pm A|^{18}|AA|^9 \gtrsim |A|^{32}.\]
\end{lemma}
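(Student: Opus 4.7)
The plan is to reduce the sum-product estimate to an upper bound on the additive energy of $A$ via Cauchy-Schwarz, and then to bound this energy using a point-line incidence estimate in which the multiplicative structure of $A$ enters through $|AA|$.

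\textbf{Step 1 (Reduction to an energy bound).} Write $E^+(A) := |\{(a_1,a_2,a_3,a_4)\in A^4:\,a_1+a_2=a_3+a_4\}|$ for the additive energy. Cauchy-Schwarz applied to the representation function $r_{A+A}(x)=|\{(a,b)\in A^2:\,a+b=x\}|$ gives $|A|^4\le E^+(A)\cdot|A+A|$. Consequently, it suffices to establish the energy inequality
\[
E^+(A)\lesssim \frac{|A+A|^{17}|AA|^9}{|A|^{28}},
\]
since combining with $|A|^4 \leq E^+(A)\cdot|A+A|$ produces exactly $|A+A|^{18}|AA|^9\gtrsim|A|^{32}$. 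The argument for $|A-A|$ is the same because $E^+(A)=\sum_x r_{A-A}(x)^2$ and $|A|^4 \le E^+(A)\cdot |A-A|$ likewise holds.

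\textbf{Step 2 (Dyadic pigeonholing).} Decompose the energy dyadically as $E^+(A)=\sum_{x\in A-A}r_{A-A}(x)^2$. By pigeonholing there exists a level $\tau>0$ and a set $P\subseteq A-A$ with $r_{A-A}(x)\sim\tau$ for every $x\in P$, and $|P|\tau^2\gtrsim E^+(A)/\log|A|$.

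\textbf{Step 3 (Incidence configuration).} For each $x\in P$, the $\sim\tau$ pairs $(a,b)\in A^2$ with $a-b=x$ can be encoded as points lying on a fixed affine line. Multiplying each such relation $a-b=x$ by elements of $A$ introduces the multiplicative structure of $A$ and produces $\sim\tau|A|$ identities that translate into a point-line incidence problem between a point set supported in, roughly, $A\times AA$ and a family of lines parametrized by $P$ together with $A$. Applying the Stevens-de Zeeuw incidence bound of Lemma \ref{lienthuoc} (or, where necessary, Rudnev's sharper point-plane incidence theorem) then bounds $|P|\tau^2$ from above in terms of $|A|$, $|A+A|$ and $|AA|$. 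Rearranging yields the energy estimate in Step~1.

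\textbf{Main obstacle.} The crux is calibrating the incidence configuration so that the resulting exponents precisely match the desired $17$, $9$, $28$; a single naive application of Lemma \ref{lienthuoc} will not suffice, and one expects to either iterate the incidence estimate or invoke an auxiliary third-moment energy bound $E_3^+(A)=\sum_x r_{A-A}(x)^3$ coupled with Plünnecke--Ruzsa-type manipulations to optimize the exponents. Furthermore, Lemma \ref{lienthuoc} is applicable only under a cardinality constraint of the form $|A||L|\ll p^2$; the threshold $|A|\le p^{9/16}$ is exactly what is needed to verify this condition for the relevant configuration, and the argument genuinely degenerates once $|A|$ exceeds this size.
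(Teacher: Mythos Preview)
This lemma is not proved in the paper at all: it is simply quoted as Theorem~2 of \cite{m} (Murphy--Petridis--Roche-Newton--Rudnev--Shkredov) and used as a black box in the proofs of Theorems~\ref{thm3} and~\ref{thm4}. There is therefore no ``paper's own proof'' to compare your attempt against.

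As a standalone argument your sketch is not yet a proof. Step~1 and Step~2 are standard and correct, but Step~3 is a description of the \emph{shape} of an argument rather than an argument: you do not specify the point set, the line set, or how many incidences you actually obtain, and you explicitly concede in the ``Main obstacle'' paragraph that a single application of Lemma~\ref{lienthuoc} will not give the exponents $17$, $9$, $28$ and that ``one expects'' to iterate or to bring in $E_3^+(A)$. That is precisely the heart of the matter in \cite{m}, and it is nontrivial: the actual proof there goes through a careful chain of energy inequalities (relating $E^+$, $E_3^+$, and a fourth-moment quantity) together with Rudnev's point--plane theorem, and the specific exponents $18$, $9$, $32$ come out only after optimising over these auxiliary bounds. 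Your outline does not supply any of that machinery, so as written it does not establish the lemma.
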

\bigskip
We are now ready to prove Theorem \ref{thm1}.
\begin{proof}[Proof of Theorem \ref{thm1}]
Without loss of generality, we may assume that $0\notin A.$
Let $M_1$ and $M_2$ be matrices in $R(A)$ presented as follows:
\[M_1:=\left(\begin{matrix}
a_{11}&a_{12}\\
a_{21}&\frac{1+a_{12}a_{21}}{a_{11}}
\end{matrix}\right), ~~M_2:=\left(\begin{matrix}
b_{11}&b_{12}\\
b_{21}&\frac{1+b_{12}b_{21}}{b_{11}}
\end{matrix}\right).\]
Suppose that 
\[M_1\cdot M_2=\left(\begin{matrix}
t&\alpha\\
\beta&\frac{1+\alpha\beta}{t}
\end{matrix}\right),\]
where $t\ne 0$ and $\alpha, \beta\in \mathbb F_p.$
Then we have the following system 
\begin{equation}\label{eq1}
a_{11}b_{11}+a_{12}b_{21}=t,~ \frac{b_{12}t}{b_{11}}+\frac{a_{12}}{b_{11}}=\alpha, ~~\frac{a_{21}t}{a_{11}}+\frac{b_{21}}{a_{11}}=\beta.
\end{equation}
Let us identify the matrix $M_1\cdot M_2$ with $(t, \alpha, \beta) \in \mathbb F_p^*\times \mathbb F_p^2.$
Notice that $R(A)\cdot R(A)$ contains each $(t, \alpha, \beta)\in \mathbb F_p^*\times \mathbb F_p^2$ satisfying the system \eqref{eq1} for some $(a_{11}, a_{12}, a_{21}, b_{11}, b_{12}, b_{21})$ in $A^6.$
Therefore, we aim to estimate a lower bound of the number of $(t, \alpha, \beta)\in \mathbb F_p^*\times \mathbb F_p^2$ such that the system \eqref{eq1} holds for some $(a_{11}, a_{12}, a_{21}, b_{11}, b_{12}, b_{21})\in A^6.$ To this end, let $\epsilon>0$ be a parameter chosen later. We now consider two following cases:
\begin{enumerate}
\item If $|AA|\ge |A|^{1+\epsilon}$, then it follows from Lemma \ref{m-dif1} that 
\[|AA+AA|\gg \min\{|A|^{\frac{3}{2}+\frac{\epsilon}{2}},~p\}=|A|^{\frac{3}{2}+\frac{\epsilon}{2}},\]
where we assume that  
\begin{equation*}|A|\le p^{\frac{2}{3+\epsilon}}.\end{equation*}
From the system (\ref{eq1}) and the above fact, we obtain that if $|A|\le p^{\frac{2}{3+\epsilon}}$ and $|AA|\ge |A|^{1+\epsilon}$, then
\begin{equation}\label{eq3.5}|R(A)\cdot R(A)|\gg |AA+AA||A|^2\gg |A|^{\frac{7}{2}+\frac{\epsilon}{2}},\end{equation}
where the first $\gg$ follows, because in the system \eqref{eq1}, for each non-zero $t\in AA+AA$, if we fix a quadruple $(a_{11}, b_{11}, a_{12}, b_{21})\in A^4$ with $a_{11}b_{11}+a_{12}b_{21}=t$, then $\alpha, \beta$ are determined in terms of $b_{12}\in A$ and $a_{21}\in A$, respectively. 

\item If $|AA|\le |A|^{1+\epsilon}$, then we consider as follows. For $t, \alpha, \beta\in \mathbb{F}_p$, let $\nu(t, \alpha, \beta)$ be the number of solutions $(a_{11}, a_{12}, a_{21}, b_{11}, b_{12}, b_{21})$ of the system (\ref{eq1}). For the case $t=0$, we have 
\[\sum_{\alpha, \beta}\nu(0, \alpha, \beta)\le |A|^5.\]
Indeed, for each choice of $(b_{11}, a_{12}, b_{21})\in A^3$, $a_{11}$ is determined uniquely, and $\alpha, \beta$ are determined. In addition, $a_{21}$ and $b_{12}$ can be taken as arbitrary elements of $A.$

By the Cauchy-Schwarz inequality, we have 
\[(|A|^6-|A|^5)^2\le \left(\sum_{t\ne 0, \alpha, \beta} \nu(t, \alpha, \beta)\right)^2\le |R(A)\cdot R(A)|\sum_{t\ne 0, \alpha, \beta} \nu^2(t, \alpha, \beta).\]
This implies that 
\begin{equation}\label{CSd}|R(A)\cdot R(A)|\gg \frac{|A|^{12}}{T},\end{equation}
where $\displaystyle T:=\sum_{t\ne 0, \alpha, \beta} \nu^2(t, \alpha, \beta)$. 

In the next step, we are going to show that 
\[T\ll |A|^{8+\frac{5\epsilon}{2}}.\]

To see this,  observe by definition of $\nu(t,\alpha, \beta)$ that for each $(t, \alpha, \beta)\in \mathbb F_p^*\times \mathbb F_p^2,$  the value $\nu^2(t, \alpha, \beta)$ is the number of 12-tuples $(a_{11}, a_{12}, a_{21}, b_{11}, b_{12}, b_{21}, a'_{11}, a'_{12}, a'_{21}, b'_{11}, b'_{12}, b'_{21})\in A^{12}$ satisfying the following:
%$$\left\{\begin{array}{ll} a_{11}b_{11}+a_{12}b_{21}&=t= a'_{11}b'_{11}+a'_{12}b'_{21}\\
 %  \frac{b_{12}t}{b_{11}} + \frac{a_{12}}{b_{11}}&=\alpha=\frac{b'_{12}t}{b'_{11}} + \frac{a'_{12}}{b'_{11}}\\
%   \frac{a_{21}t}{a_{11}} + \frac{b_{21}}{a_{11}}&=\beta=\frac{a'_{21}t}{a'_{11}} + \frac{b'_{21}}{a'_{11}}.\end{array}\right. %$$
\begin{align*} a_{11}b_{11}+a_{12}b_{21}&=t= a'_{11}b'_{11}+a'_{12}b'_{21}\\
   \frac{b_{12}t}{b_{11}} + \frac{a_{12}}{b_{11}}&=\alpha=\frac{b'_{12}t}{b'_{11}} + \frac{a'_{12}}{b'_{11}}\\
   \frac{a_{21}t}{a_{11}} + \frac{b_{21}}{a_{11}}&=\beta=\frac{a'_{21}t}{a'_{11}} + \frac{b'_{21}}{a'_{11}}.\end{align*}
Thus the value of $\displaystyle T=\sum_{t\ne 0, \alpha, \beta} \nu^2(t, \alpha, \beta)$ can be written by
$ \displaystyle \sum_{t\ne 0} \Omega(t)$
where $\Omega(t)$ denotes the number of 12-tuples $(a_{11}, a_{12}, a_{21}, b_{11}, b_{12}, b_{21}, a'_{11}, a'_{12}, a'_{21}, b'_{11}, b'_{12}, b'_{21})\in A^{12}$ satisfying the following:
\begin{align}\label{Aeq1} a_{11}b_{11}+a_{12}b_{21}&=t= a'_{11}b'_{11}+a'_{12}b'_{21}\\
\label{Aeq2}\frac{b_{12}t}{b_{11}} + \frac{a_{12}}{b_{11}}&=\frac{b'_{12}t}{b'_{11}} + \frac{a'_{12}}{b'_{11}}\\
\label{Aeq3}  \frac{a_{21}t}{a_{11}} + \frac{b_{21}}{a_{11}}&=\frac{a'_{21}t}{a'_{11}} + \frac{b'_{21}}{a'_{11}}.\end{align}
Now notice that  Lemma \ref{lm2} implies that if $|A|\ll p^{2/3},$ then there are at most $|A|^{13/2}$\\ 
$8$-tuples $(a_{11}, b_{11}, a_{12}, b_{21}, a'_{11}, b'_{11}, a'_{12}, b'_{21})$ in $A^8$ satisfying the equations \eqref{Aeq1} 
%\[a_{11}b_{11}+a_{12}b_{21}=t=a'_{11}b'_{11}+a'_{12}b'_{21}\]
%is at most $|A|^{13/2},$ 
for some $t\ne 0.$   One can also check that among these tuples, there are at most $|A|^6$ $(\leq \frac{|A|^{13/2}}{2})$ tuples with  $a'_{12}b'^{-1}_{11}-a_{12}b^{-1}_{11}=0$. Hence, without loss of generality, we may assume that all tuples satisfy $a'_{12}b'^{-1}_{11}-a_{12}b^{-1}_{11}\ne 0$.

For such a fixed $8$-tuple $(a_{11}, b_{11}, a_{12}, b_{21}, a'_{11}, b'_{11}, a'_{12}, b'_{21})\in A^8$, we now deal with the equation \eqref{Aeq2} which can be rewritten by
\begin{equation}\label{eqq1}
\frac{b_{12}}{t^{-1}b_{11}}+a_{12}b^{-1}_{11}=\frac{b'_{12}}{t^{-1}b'_{11}}+a'_{12}b'^{-1}_{11}.\end{equation}
Set $Q=\frac{t}{b_{11}}\cdot A$,~ $Q'=\frac{t}{b'_{11}}\cdot A$, and $x=a_{12}b^{-1}_{11}-a'_{12}b'^{-1}_{11}\ne 0$.  Then the number of solutions $(b_{12},b_{12}')\in A^2$ of (\ref{eqq1}) is the size of $Q\cap (Q'-x)$. It is clear that $|Q|=|Q'|=|A|,$ because $t\ne 0$ and we have assumed that $0\notin A$ so that $t/b_{11}, t/b'_{11}\ne 0.$ We also see that 
$$ |Q|^2|Q\cdot Q'|=|A|^2|AA| \le |A|^{3+\epsilon},$$
where we used the assumption that $|AA|\le |A|^{1+\epsilon}.$ 
Applying Lemma \ref{lm3}, we obtain that if $|A| \ll p^{2/(3+\epsilon)},$ then 
\[|Q\cap (Q'-x)| \ll |A|^{\frac{3}{4}+\frac{5\epsilon}{4}}.\]
%under the conditions that $|A|\ll p^{2/3}$ and $|AA|\le |A|^{1+\epsilon}.$
The same argument works identically for the equation \eqref{Aeq3} which can be restated by
\begin{equation}\label{eqq1'}
\frac{a_{21}}{t^{-1}a_{11}}+b_{21}a^{-1}_{11}=\frac{a'_{21}}{t^{-1}a'_{11}}+b'_{21}a'^{-1}_{11}.\end{equation}
In short, we have proved that if $|A|\ll p^{2/(3+\epsilon)}$ and $|AA|\le |A|^{1+\epsilon}$, then 
\[T\ll |A|^{\frac{13}{2}}|A|^{\frac{3}{4}+\frac{5\epsilon}{4}}|A|^{\frac{3}{4}+\frac{5\epsilon}{4}}= |A|^{8+\frac{5\epsilon}{2}}.\]
Therefore, combining \eqref{CSd} with this estimate  yields that if $|A|\ll p^{2/(3+\epsilon)}$ and $|AA|\le |A|^{1+\epsilon}$, then
\begin{equation}\label{eq3}|R(A)\cdot R(A)|\gg |A|^{4-\frac{5\epsilon}{2}}.\end{equation}
\end{enumerate}
Finally, if we choose $\epsilon= 1/6$, then it follows from (\ref{eq3.5}) and (\ref{eq3}) that if $|A|\ll p^{12/19}$, then
\[|R(A)\cdot R(A)| \gg |A|^{\frac{7}{2}+\frac{1}{12}},\]
which completes the proof of Theorem \ref{thm1}. 
\end{proof}
\bigskip
In the case of arbitrary finite fields, we have the following result.
\bigskip
\begin{theorem}\label{thm2}
Let $q = p^n$ and let $A$ be a subset of $\mathbb{F}^{*}_{q}$. If $|A \cap \lambda F| \leq |F|^{1/2}$ for any proper subfield $F$ of $\mathbb{F}_{q}$ and any $\lambda \in \mathbb{F}_q$, then we have 
\[|R(A)\cdot R(A)|\gtrsim |A|^{3+\frac{1}{5}}.\]
\end{theorem}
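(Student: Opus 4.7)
The plan is to follow the same dichotomy as in the proof of Theorem \ref{thm1}, but with $\mathbb{F}_p$ replaced by $\mathbb{F}_q$ throughout, invoking the subfield hypothesis $|A\cap\lambda F|\le |F|^{1/2}$ to activate the needed sum--product and incidence inputs. First I would parametrize $R(A)\cdot R(A)$ by $(t,\alpha,\beta)\in\mathbb{F}_q^*\times\mathbb{F}_q^2$ through the system \eqref{eq1}; this is a purely algebraic identity whose derivation does not use primality of the field. Without loss of generality one may again take $0\notin A$.

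Next I would split on $|AA|$ with a parameter $\epsilon>0$ to be optimized. In the high-multiplication case $|AA|\ge |A|^{1+\epsilon}$, I would invoke an $\mathbb{F}_q$-analog of Lemma \ref{m-dif1}, i.e.\ a sum--product type lower bound on $|X+AA|$ valid under the subfield condition, to produce a lower bound on $|AA+AA|$. Fixing $t\in AA+AA$ in \eqref{eq1} and letting $a_{21},b_{12}\in A$ vary freely then yields $|R(A)\cdot R(A)|\gg |AA+AA|\cdot |A|^2$, just as in \eqref{eq3.5}. In the low-multiplication case $|AA|\le |A|^{1+\epsilon}$, I would apply the Cauchy--Schwarz step as in \eqref{CSd}, reducing the problem to bounding the energy $T=\sum_{t\ne 0,\alpha,\beta}\nu^2(t,\alpha,\beta)$. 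The bound on $T$ factors through (i) a Rudnev-type fourth moment inequality for the number of solutions to $a_1a_2+a_3a_4=a_1'a_2'+a_3'a_4'$, which follows from Rudnev's point--plane incidence theorem and its extension to $\mathbb{F}_q$ under the subfield hypothesis, together with (ii) an $\mathbb{F}_q$-analog of Lemma \ref{lm3} controlling $|Q\cap(Q'-x)|$ through a point--line incidence theorem in $\mathbb{F}_q^2$ for point sets avoiding cosets of subfields.

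The main obstacle is the weaker incidence geometry available in $\mathbb{F}_q$: the prime-field estimate of Lemma \ref{lienthuoc} has no direct counterpart with the same exponents, so one is forced to substitute a Vinh-type or Stevens-type bound that holds only under the subfield condition and carries worse constants. This degrades the estimate for $|A\cap(B+x)|$, and hence the energy bound for $T$, which is precisely why the final gain over the trivial exponent $3$ shrinks from $1/2+1/12$ in Theorem \ref{thm1} to $1/5$ here. After combining the two cases and optimizing $\epsilon$ to balance them, the target bound $|R(A)\cdot R(A)|\gtrsim |A|^{3+1/5}$ will emerge. A secondary technical point is to check that the size constraints of the underlying incidence theorems (analogues of $|A||L|\ll p^2$ and $|A|\ll p^{2/(3+\epsilon)}$ in the prime case) are automatically satisfied in $\mathbb{F}_q$ once the subfield condition is in force; this should reduce to the observation that the hypothesis forces $|A|$ to be small relative to the relevant powers of $q$.
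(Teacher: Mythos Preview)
Your plan is far more elaborate than what is needed, and it also has genuine gaps. The paper's proof is essentially one line: the inequality
\[
|R(A)\cdot R(A)|\gg |AA+AA|\cdot|A|^2
\]
from \eqref{eq3.5} is a purely combinatorial consequence of the parametrization \eqref{eq1} and holds over any field, with no assumption on $|AA|$ whatsoever. (For each nonzero $t\in AA+AA$ fix one representation $t=a_{11}b_{11}+a_{12}b_{21}$; then $\alpha$ and $\beta$ range independently over sets of size $|A|$ as $b_{12}$ and $a_{21}$ vary.) The subfield hypothesis enters only at the very end, through the estimate $|AA+AA|\gg |A|^{6/5}$ of Corollary~\ref{coxt1}, which in turn comes from Theorem~\ref{thm-oliver}. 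Multiplying gives $|R(A)\cdot R(A)|\gg |A|^{2+6/5}=|A|^{16/5}$, and no dichotomy on $|AA|$ is ever invoked.

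Your proposed route, by contrast, tries to replicate the entire energy argument of Theorem~\ref{thm1} over $\mathbb{F}_q$, and this is where it breaks. The key inputs to that argument---Lemma~\ref{lm2} (via the Roche-Newton--Rudnev--Shkredov bound) and Lemma~\ref{lm3} (via the Stevens--de Zeeuw point--line bound of Lemma~\ref{lienthuoc})---are genuinely prime-field results: the underlying incidence theorems of Rudnev and Stevens--de Zeeuw rely on the structure of $\mathbb{F}_p$ and do not transfer to general $\mathbb{F}_q$ merely by imposing a subfield-avoidance condition on $A$. You acknowledge that the exponents worsen, but you never identify concrete $\mathbb{F}_q$ substitutes for these lemmas, and in fact no such substitutes with usable exponents are stated in the paper (or readily available in the literature). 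So the ``low-multiplication'' branch of your dichotomy cannot be carried out as written, and the claim that the optimized $\epsilon$ produces exactly $1/5$ is unsupported.

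The fix is to drop the dichotomy entirely: use only the unconditional bound $|R(A)\cdot R(A)|\gg |AA+AA||A|^2$ and then appeal to Corollary~\ref{coxt1}. That is where the exponent $1/5$ actually comes from.
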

\bigskip
To prove Theorem \ref{thm2} we make use of the following results.
\bigskip
\begin{theorem}[\cite{lili}] \label{lli}
%Let $q = p^n$ and let $A$ be a subset of $\mathbb{F}^{*}_{q}$. If $|A \cap \lambda F| \leq |F|^{1/2}$ for any proper subfield $F$ of $\mathbb{F}_{q}$ and any  $\lambda \in \mathbb{F}_q$, 
With the assumptions of Theoem \ref{thm2}, we have
\[
	\max\{|A+A|, |AA| \} \gtrsim |A|^{12/11}.
\]
\end{theorem}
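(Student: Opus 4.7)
The plan is to run the classical Elekes sum-product argument, where the key quantitative input is a Bourgain--Katz--Tao (BKT) style incidence estimate in $\mathbb{F}_q^2$ that exploits the subfield non-concentration hypothesis on $A$. Assume without loss of generality that $0 \notin A$. Define the line set $L = \{\ell_{a,b} : a, b \in A\}$ where $\ell_{a,b}$ is the line $y = a(x-b)$. Since $a \in \mathbb{F}_q^*$, distinct pairs $(a,b) \in A \times A$ produce distinct lines, so $|L| = |A|^2$. For every triple $(a,b,c) \in A^3$ the point $(b+c,\, ac)$ lies on $\ell_{a,b}$ and belongs to $P := (A+A) \times (AA)$, which gives the lower bound $I(P,L) \geq |A|^3$.

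The next step is to bound $I(P,L)$ from above via a BKT-type incidence theorem in $\mathbb{F}_q^2$. Under the non-concentration hypothesis $|A \cap \lambda F| \leq |F|^{1/2}$, one has
\[
I(P,L) \lesssim (|P|\,|L|)^{\,3/4 - \eta} + |P| + |L|
\]
for some explicit $\eta > 0$; this improvement over the trivial $3/4$-exponent bound is precisely what BKT delivers in this setting. The hypothesis is essential here, because any serious concentration of $AA$ or $A+A$ on a coset $\lambda F$ of a proper subfield would pull back to a concentration of $A$ itself on some translate of $F$, contradicting $|A \cap \mu F| \leq |F|^{1/2}$. Substituting $|L| = |A|^2$ and $|P| \leq |A+A|\cdot|AA|$ into the incidence inequality and rearranging yields
\[
|A+A|\cdot|AA| \gtrsim |A|^{(3/2 + 2\eta)/(3/4 - \eta)}.
\]
Taking $\eta = 3/92$, the exponent on the right simplifies to $24/11$, and $\max\{|A+A|,|AA|\} \gtrsim |A|^{12/11}$ follows.

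The main obstacle is obtaining the BKT incidence estimate with the precise quantitative improvement $\eta \geq 3/92$ under the stated non-concentration hypothesis. The original BKT argument yields \emph{some} $\eta > 0$ over the trivial exponent, but pinning down an explicit admissible value requires careful tracking through the sum-product iteration (Plünnecke--Ruzsa, the Balog--Szemer\'edi--Gowers theorem, the Katz--Tao lemma) and verifying that the subfield hypothesis propagates intact at every step. An alternative route, which may be cleaner for finite fields of non-prime order, is to bound a higher multiplicative energy $T_k(A)$ directly using a Rudnev-type point--plane incidence bound in $\mathbb{F}_q^3$ and then invert via Plünnecke--Ruzsa; the same non-concentration hypothesis is needed to exclude degenerate configurations supported on a subfield plane, and optimizing the choice of $k$ produces the exponent $12/11$.
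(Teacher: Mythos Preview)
The paper does not prove this theorem; it is quoted from Li and Roche-Newton \cite{lili} and used as a black box. Their argument does not go through an incidence bound at all: it is an additive-combinatorial argument in the style of Glibichuk--Konyagin and Katz--Shen, based on a case analysis of whether the ratio set $R(A)=\{(a_1-a_2)/(a_3-a_4)\}$ is closed under certain operations (essentially the same machinery you see in the proof of Theorem~\ref{thm-oliver} here). When closure fails one extracts growth directly via Pl\"unnecke--Ruzsa; when closure holds one shows $R(A)$ contains the subfield $\mathbb{F}_A$ generated by $A$, and the hypothesis $|A\cap\lambda F|\le|F|^{1/2}$ then forces the desired bound.

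Your proposal, by contrast, is circular. The Elekes configuration does convert an incidence bound into a sum-product bound, but in $\mathbb{F}_q$ the flow of implication runs the other way: every known quantitative BKT-type incidence estimate is \emph{derived from} a sum-product estimate of exactly the kind you are trying to prove. You acknowledge this yourself when you say the ``main obstacle'' is obtaining the incidence bound with $\eta\ge 3/92$; that obstacle is the entire theorem. There is a second, independent gap: the BKT mechanism needs non-concentration of the \emph{point set} $P=(A+A)\times AA$ in subfield cosets, and your one-line claim that concentration of $A+A$ or $AA$ on some $\lambda F$ would pull back to concentration of $A$ is not correct in general (sumsets and product sets can sit inside a coset of a large subfield even when $A$ itself is spread out). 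So neither the quantitative input nor its hypothesis-checking step is actually supplied.
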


\begin{theorem}\label{thm-oliver}
For $A, B\subset \mathbb{F}_q$, suppose that $|A\cap \lambda F|, |B\cap \lambda F|\le |F|^{1/2}$ for any subfield $F$ of $\mathbb{F}_q$ and any $\lambda \in \mathbb{F}_q$. Then we have 
\[|A+AB|\gg \min \{|A||B|^{1/5}, |A|^{3/4}|B|^{2/4}\}.\]
\end{theorem}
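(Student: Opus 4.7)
The plan is to establish the two lower bounds in the minimum separately and take the smaller one. The first bound $|A|^{3/4}|B|^{1/2}$ will come from incidence geometry, and the second bound $|A||B|^{1/5}$ from a sum-product dichotomy using Theorem \ref{lli}.

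For the incidence bound, I would encode $A+AB$ as follows: for each $(a,b)\in A\times B$, the line $\ell_{a,b}\colon y=bx+a$ in $\mathbb{F}_q^2$ passes through each of the $|A|$ points $(a',a+a'b)\in A\times (A+AB)$ as $a'$ ranges over $A$. Since slope and intercept determine $(a,b)$, the line set $L$ has $|A||B|$ distinct lines, yielding at least $|A|^2|B|$ incidences with the Cartesian grid $A\times(A+AB)$. Applying an $\mathbb{F}_q$-analogue of the Stevens--de Zeeuw incidence bound valid under the subfield non-concentration hypothesis (a Lemma \ref{lienthuoc}-type estimate in the general finite field setting), the incidence count is majorized by an expression of the shape
\[I(A\times(A+AB),L)\ll |A|^{3/4}|A+AB|^{1/2}(|A||B|)^{3/4}+|A||B|,\]
and isolating $|A+AB|$ yields the first claimed bound.

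For the sum-product bound, I would dichotomize on the size of $AB$. If $|AB|\gg |A||B|^{1/5}$, then fixing any $a_0\in A$ gives $a_0+AB\subseteq A+AB$ and we are done immediately. In the opposite regime $|AB|\ll |A||B|^{1/5}$, the smallness of the product set forces multiplicative structure on $B$ (relative to $A$); since $B$ satisfies the subfield non-concentration hypothesis, Theorem \ref{lli} gives $\max\{|B+B|,|BB|\}\gtrsim |B|^{12/11}$, and an asymmetric Pl\"unnecke--Ruzsa calculation can be used to transfer the additive/multiplicative expansion forced by this estimate back to $A+AB$, producing the bound $|A+AB|\gg |A||B|^{1/5}$ in this branch too.

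The main obstacle is twofold. First, one must identify and justify the correct $\mathbb{F}_q$ incidence bound under the subfield hypothesis; the weaker exponents available in $\mathbb{F}_q$ versus $\mathbb{F}_p$ account for the $|A|^{3/4}$ loss (rather than the $|A|$ one would obtain directly from Lemma \ref{lienthuoc} over $\mathbb{F}_p$). Second, and more delicate, is the exponent balancing in the sum-product dichotomy: the threshold $|A||B|^{1/5}$ must be chosen precisely so that both branches produce matching bounds, and the asymmetric Pl\"unnecke--Ruzsa chain fed by the $|B|^{12/11}$ input from Theorem \ref{lli} must be tracked carefully to confirm that the resulting exponent of $|B|$ is exactly $1/5$.
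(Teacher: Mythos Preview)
Your proposal has genuine gaps in both branches, and the paper proceeds by an entirely different method.

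For the incidence branch, you invoke ``an $\mathbb{F}_q$-analogue of the Stevens--de Zeeuw incidence bound valid under the subfield non-concentration hypothesis''. No such result is available: Lemma \ref{lienthuoc} (Stevens--de Zeeuw) is proved over prime fields via Rudnev's point--plane theorem, and there is no known replacement over general $\mathbb{F}_q$ that takes only the hypothesis $|A\cap\lambda F|\le |F|^{1/2}$ as input. Your argument therefore rests on a lemma that does not exist in the literature and is not supplied in the proposal. (Incidentally, if the bound you wrote \emph{were} valid, solving for $|A+AB|$ would give $|A+AB|\gg |A||B|^{1/2}$, strictly stronger than the $|A|^{3/4}|B|^{1/2}$ you are aiming for --- another sign that the quoted inequality is not the right one.)

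For the sum-product branch, the dichotomy on $|AB|$ does not close. In the case $|AB|\ll |A||B|^{1/5}$ you appeal to Theorem \ref{lli} to get $\max\{|B+B|,|BB|\}\gtrsim |B|^{12/11}$, and then assert that ``an asymmetric Pl\"unnecke--Ruzsa calculation'' transfers this to $|A+AB|$. But neither $|B+B|$ nor $|BB|$ being large has any direct Pl\"unnecke-type implication for $|A+AB|$; Pl\"unnecke--Ruzsa relates sumsets of the \emph{same} type, and here you would need to convert multiplicative growth of $B$ or additive growth of $B$ into growth of the mixed expression $A+AB$. No mechanism for this is indicated, and I do not see one that produces the exponent $1/5$.

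The paper's proof is a Glibichuk--Konyagin / Li--Roche-Newton style argument on the ratio set
\[R(A,B)=\left\{\frac{a_1-a_2}{b_1-b_2}:a_i\in A,\ b_i\in B,\ b_1\ne b_2\right\}.\]
If any of the inclusions $1+R(A,B)\subset R(A,B)$, $B\cdot R(A,B)\subset R(A,B)$, $B^{-1}\cdot R(A,B)\subset R(A,B)$ fails, one obtains an $r\notin R(A,B)$ with $|A+rB|=|A||B|$, and a Pl\"unnecke chain (Lemmas \ref{lm-oliver1}, \ref{lm-oliver2}) bounds $|A+rB|$ above by $|A+AB|^5/|A|^4$, yielding $|A+AB|\gg |A||B|^{1/5}$. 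If all three inclusions hold, then $R(A,B)$ absorbs $P(B,\ldots,B)$ for every polynomial $P$ over $\mathbb{F}_p$, hence (via Lemma \ref{lm-oliver3}) absorbs $\mathbb{F}_B$; the subfield hypothesis forces $|R(A,B)|\ge |\mathbb{F}_B|\ge |B|^2$, and an energy pigeonhole then locates $r\in R(A,B)$ with $|A+rB|\gg\min\{|A||B|,|B|^2\}$, from which Pl\"unnecke gives the second bound $|A|^{3/4}|B|^{1/2}$. The subfield hypothesis enters only in this last case, and incidence bounds play no role.
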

\begin{corollary}\label{coxt1}
For $A\subset \mathbb{F}_q$, suppose that $|A\cap \lambda F|\le  |F|^{1/2}$ for any subfield $F$ of $\mathbb{F}_q$ and any $\lambda \in \mathbb{F}_q$. Then we have 
\[|AA+AA|\gg |A|^{6/5}.\]
\end{corollary}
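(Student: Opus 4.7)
The plan is to reduce $|AA+AA|$ to a quantity of the form $|A+AB|$, so that Theorem~\ref{thm-oliver} applies directly. The key observation is that one multiplicative layer of $AA$ can be absorbed into a rescaled copy of $A$ without breaking the subfield non-concentration hypothesis.

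First, fix any nonzero $a_0 \in A$ (the statement is trivial if $A \subseteq \{0\}$). Since $a_0 A \subseteq AA$, we have the inclusion $a_0 A + AA \subseteq AA + AA$, so it suffices to bound $|a_0 A + AA|$ from below. Multiplying by $a_0^{-1}$, which is a bijection on $\mathbb{F}_q$, yields
\[
|a_0 A + AA| \;=\; |A + a_0^{-1} AA| \;=\; |A + A \cdot B|,
\]
where I set $B := a_0^{-1} A$. Note that $|B| = |A|$.

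Next, I would verify that $B$ inherits the subfield non-concentration hypothesis. For any subfield $F \subseteq \mathbb{F}_q$ and any $\lambda \in \mathbb{F}_q$, the scaling identity
\[
|B \cap \lambda F| \;=\; |a_0^{-1} A \cap \lambda F| \;=\; |A \cap a_0\lambda F| \;\le\; |F|^{1/2}
\]
follows immediately from the hypothesis on $A$ applied with the scaling parameter $a_0\lambda$. Hence both $A$ and $B$ satisfy the assumption of Theorem~\ref{thm-oliver}.

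Applying Theorem~\ref{thm-oliver} to the pair $(A,B)$ and using $|B|=|A|$,
\[
|A + AB| \;\gg\; \min\bigl\{|A|\cdot |B|^{1/5},\ |A|^{3/4}|B|^{1/2}\bigr\} \;=\; \min\bigl\{|A|^{6/5},\ |A|^{5/4}\bigr\} \;=\; |A|^{6/5}.
\]
Chaining the three estimates gives $|AA+AA| \ge |a_0 A + AA| = |A + AB| \gg |A|^{6/5}$, as required. I do not anticipate any serious obstacle; the only subtlety worth highlighting is the transfer of the subfield condition from $A$ to the scaled set $B = a_0^{-1}A$, which works precisely because the hypothesis is quantified over \emph{all} $\lambda \in \mathbb{F}_q$.
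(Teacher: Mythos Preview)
Your proof is correct and follows essentially the same route as the paper: both arguments fix a nonzero element of $A$, use the resulting rescaling to reduce $|AA+AA|$ to an expression of the form $|A'+A'B'|$ with $|A'|=|B'|=|A|$, check that the rescaled set still satisfies the subfield non-concentration hypothesis, and then apply Theorem~\ref{thm-oliver}. The only cosmetic difference is that the paper rescales both copies of $A$ to $A/x$ (so that $1\in A/x$ and hence $A/x\subseteq (A/x)^2$), whereas you rescale only one factor to $B=a_0^{-1}A$; these are the same manoeuvre.
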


\begin{proof}
Given a nonzero $x \in A$, we have
$|AA+AA|= |\frac{A}{x} \frac{A}{x}+\frac{A}{x} \frac{A}{x}| \geq |\frac{A}{x}\frac{A}{x}+\frac{A}{x}| \gg |A|^{6/5}$ by Theorem 2.8. 

\end{proof}

\bigskip
we are now ready to prove Theorem \ref{thm2}.
\begin{proof}[Proof of Theorem \ref{thm2}]
Recall from \eqref{eq3.5} that  
\[|R(A)\cdot R(A)|\gg |AA+AA||A|^2.\]
Thus the Theorem follows directly from  Corollary \ref{coxt1}.
\end{proof}
In the rest of this section, we present the proof of Theorem \ref{thm-oliver}, for which the authors communicated with Oliver Roche-Newton.
\subsection{Proof of Theorem \ref{thm-oliver}}
To prove Theorem \ref{thm-oliver}, we make use of the following lemmas. 

The first lemma is the Pl\"{u}nnecke-Ruzsa inequality. 
\begin{lemma}[\cite{ttr}, Theorems 1.6.1, 1.81]\label{lm-oliver1}
Let $X, B_1, \ldots, B_k$ be subsets of $\mathbb{F}_q$. Then we have 
\[|B_1+\cdots +B_k|\le \frac{|X+B_1|\cdots |X+B_k|}{|X|^{k-1}},\]
and 
\[|B_1-B_2|\le \frac{|X+B_1||X+B_2|}{|X|}.\]
\end{lemma}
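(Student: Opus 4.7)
The two stated inequalities are classical forms of the Plünnecke--Ruzsa inequality, as recorded in \cite{ttr}. My plan is to derive the iterated sumset bound via Petridis's streamlined proof of the Plünnecke--Ruzsa theorem, and to derive the difference bound as a direct consequence of Ruzsa's triangle inequality, which itself has a short combinatorial proof.

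For the first inequality, the main tool is Petridis's lemma: given a finite nonempty $A \subset \Fq$ and a nonempty $X \subset \Fq$, let $Y \subseteq X$ be a nonempty subset minimizing the ratio $K = |Y+A|/|Y|$ over nonempty subsets of $X$. Then for every finite $C \subset \Fq$,
$$|Y + A + C| \leq K \cdot |Y + C|.$$
I would prove this by induction on $|C|$: the base $|C|=1$ is just the definition of $K$. For the inductive step, upon adding a new element $c$ to $C$, I split $Y$ into the ``saturated'' subset $T = \{y \in Y : y+A+c \subseteq Y+A+C\}$ and its complement $Y \setminus T$. Elements of $Y \setminus T$ contribute new points to $Y+(C \cup \{c\})$, since for any such $y$ the translate $y+c$ cannot lie in $Y+C$; combining this with the inductive hypothesis on $|Y+A+C|$ and the minimality of $K$ on the complement closes the estimate. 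Once Petridis's lemma is in hand, the claimed bound $|B_1+\cdots+B_k| \leq |X+B_1|\cdots|X+B_k|/|X|^{k-1}$ follows by iteration: at each stage one selects a new minimizer inside the current subset to absorb the next $B_i$, accumulating the ratios $|X+B_i|/|X|$ multiplicatively, and at the end drops the auxiliary subset's size to $|X|$.

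For the second inequality, I would appeal to Ruzsa's triangle inequality $|W|\cdot|U-V| \leq |U-W|\cdot|V-W|$. This itself follows from the injection $(w,d) \mapsto (u_d - w,\, v_d - w)$, where for each $d \in U-V$ one fixes a representation $d = u_d - v_d$; the pair on the right determines both $d$ (as the difference of the two coordinates) and $w$ (as $u_d$ minus the first coordinate). Applied with $U=B_1$, $V=B_2$, $W=-X$, and using $|{-X}|=|X|$ together with $|U-(-X)| = |U+X|$, this gives $|X|\cdot|B_1-B_2| \leq |X+B_1|\cdot|X+B_2|$, which rearranges to the desired form.

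The main technical obstacle is Petridis's lemma itself: its inductive bookkeeping is subtle, and the iteration for distinct summands $B_1,\ldots,B_k$ requires care in choosing the nested sequence of minimizing subsets so that the product of ratios remains controlled by $\prod_i |X+B_i|/|X|$ rather than blowing up at some intermediate stage. Both steps are by now standard in the modern treatment of the Plünnecke--Ruzsa inequality and require no input beyond what is already contained in the excerpt.
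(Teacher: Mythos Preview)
The paper does not supply its own proof of this lemma; it simply quotes the result from \cite{ttr} as a black box. Your sketch via Petridis's lemma for the iterated sumset bound and Ruzsa's triangle inequality for the difference bound is correct and is the standard modern route to these inequalities, so there is nothing to compare against and nothing to fix.
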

One can modify the proof of Corollary $1.5$ in \cite{shen} due to Katz and Shen to obtain the following. 
\bigskip
\begin{lemma}\label{lm-oliver2}
Let $X, B_1, \ldots, B_k$ be subsets in $\mathbb{F}_q$. Then, for any $0<\epsilon<1$, there exists a subset $X'\subset X$ such that $|X'|\ge (1-\epsilon)|X|$ and 
\[|X'+B_1+\cdots+B_k|\le c\cdot \frac{|X+B_1|\cdots|X+B_k|}{|X|^{k-1}},\]
for some positive constant $c=c(\epsilon).$
\end{lemma}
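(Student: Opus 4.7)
My plan is to derive Lemma \ref{lm-oliver2} by an iterative peeling argument built on the subset form of the Plünnecke--Ruzsa theorem. The key input, slightly stronger than Lemma \ref{lm-oliver1}, is the classical Plünnecke statement: for any finite non-empty $X, B_1, \ldots, B_k$ there exists a non-empty $Y\subseteq X$ with
\[\frac{|Y+B_1+\cdots+B_k|}{|Y|}\leq \prod_{i=1}^{k}\frac{|X+B_i|}{|X|}.\]
Lemma \ref{lm-oliver1} itself is the weaker consequence obtained via $|B_1+\cdots+B_k|\leq |Y+B_1+\cdots+B_k|$ and $|Y|\leq |X|$, so this subset form may be regarded as standard (it is the version proved by Ruzsa, and cleanly reproved by Petridis). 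Its defect is precisely that $|Y|$ can be arbitrarily small; what Lemma \ref{lm-oliver2} asks for is an upgrade to $|Y|\geq (1-\epsilon)|X|$ at the cost of an $\epsilon$-dependent constant.

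I would implement the upgrade by a greedy peeling procedure. Set $K=\prod_i |X+B_i|/|X|$ and initialise $R_1:=X$. Having produced $Y_1,\ldots,Y_{j-1}$, define $R_j:=X\setminus(Y_1\cup\cdots\cup Y_{j-1})$ and, as long as $|R_j|\geq \epsilon|X|$, apply the subset Plünnecke theorem to $R_j$ (with the same $B_i$'s) to select a non-empty $Y_j\subseteq R_j$ satisfying
\[\frac{|Y_j+B_1+\cdots+B_k|}{|Y_j|}\leq \prod_i\frac{|R_j+B_i|}{|R_j|}\leq \prod_i\frac{|X+B_i|}{|R_j|}\leq \epsilon^{-k}K,\]
where I used $R_j\subseteq X$ and $|R_j|\geq \epsilon|X|$ in the last two inequalities. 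The iteration halts at the first index $m$ for which $|R_m|<\epsilon|X|$, at which point $X':=Y_1\cup\cdots\cup Y_{m-1}$ has density $|X'|/|X|=1-|R_m|/|X|>1-\epsilon$. Summing the per-piece bound and using $X'+B_1+\cdots+B_k=\bigcup_j(Y_j+B_1+\cdots+B_k)$ then yields
\[|X'+B_1+\cdots+B_k|\leq \sum_j|Y_j+B_1+\cdots+B_k|\leq \epsilon^{-k}K\sum_j|Y_j|\leq \epsilon^{-k}K|X|,\]
which is exactly the stated bound with $c(\epsilon)=\epsilon^{-k}$.

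The only genuinely nontrivial ingredient is the subset form of Plünnecke, which is not literally the statement of Lemma \ref{lm-oliver1} but is standard in the literature; invoking it correctly, and verifying at each step that the shrinking residual still satisfies this form's hypotheses (with the expansion ratio controlled by $|R_j|\geq \epsilon|X|$ rather than the full $|X|$), is the main conceptual hurdle. Everything else is routine bookkeeping: termination of the peeling in at most $|X|$ steps, the union bound on sumsets, and a single chain of inequalities. The paper's approach, following Katz--Shen, presumably encodes the same idea via a weighted Plünnecke graph inequality, but the direct greedy argument sketched here should produce the same qualitative statement.
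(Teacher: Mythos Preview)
Your argument is correct, and it is essentially the Katz--Shen argument that the paper cites rather than reproving: the paper does not supply an independent proof but simply points to Corollary~1.5 of \cite{shen}, whose proof is precisely the greedy peeling you describe (repeatedly apply the subset form of Pl\"unnecke--Ruzsa to the residual set until less than an $\epsilon$-fraction of $X$ remains, then take the union). Your speculation that Katz--Shen instead proceed via ``a weighted Pl\"unnecke graph inequality'' is off; they do exactly what you do, so your approach and the paper's cited approach coincide. One cosmetic remark: your constant $c(\epsilon)=\epsilon^{-k}$ depends on $k$ as well as $\epsilon$, which is inevitable and also true of the Katz--Shen constant; the paper's notation $c=c(\epsilon)$ should be read with $k$ fixed.
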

We also have the following lemma from \cite{lili}.
\bigskip
\begin{lemma}\label{lm-oliver3}
Let $B$ be a subset of $\mathbb{F}_q$ with at least two elements, and define $\mathbb{F}_B$ as the subfield generated by $B$. Then there exists a polynomial $P(x_1, \ldots, x_n)$ of several variables with coefficients belonging to the prime field $\mathbb{F}_p$ such that 
\[P(B, \ldots, B)=\mathbb{F}_B.\]
\end{lemma}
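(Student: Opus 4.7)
The plan is to combine two ingredients: an algebraic identification of $\mathbb{F}_B$ with the ring generated by $B$ over $\mathbb{F}_p$, and a Lagrange-style packaging of many per-element polynomial expressions into a single polynomial. First I would show that every element of $\mathbb{F}_B$ is individually an $\mathbb{F}_p$-polynomial in elements of $B$; then I would stitch these representations into one.

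For the first step, let $R$ denote the $\mathbb{F}_p$-subalgebra of $\mathbb{F}_q$ generated by $B$, i.e., the set of all values $Q(b_1,\ldots,b_k)$ with $b_i \in B$ and $Q \in \mathbb{F}_p[t_1,\ldots,t_k]$. Since $R$ is a subring of the field $\mathbb{F}_q$, it is an integral domain, and being finite it is in fact a field (every finite integral domain is a field). Because $R$ contains $B$, minimality of $\mathbb{F}_B$ forces $\mathbb{F}_B \subseteq R$, and the reverse inclusion $R \subseteq \mathbb{F}_B$ is immediate from closure of $\mathbb{F}_B$ under ring operations together with $\mathbb{F}_p \subseteq \mathbb{F}_B$. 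Hence $R = \mathbb{F}_B$. Enumerating $\mathbb{F}_B = \{y_1,\ldots,y_m\}$, we can therefore write $y_j = Q_j(\vec{b}^{(j)})$ for each $j$, where $Q_j$ has $\mathbb{F}_p$ coefficients in a common number $N$ of variables (padding with any fixed element of $B$ if necessary) and $\vec{b}^{(j)} \in B^N$.

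Next I would merge these representations into a single polynomial. Pick $M$ with $|B|^M \geq m$ and assign a distinct label tuple $\vec{z}^{(j)} \in B^M$ to each index $j$. Standard Lagrange interpolation produces polynomials $L_j(\vec{z}\,) \in \mathbb{F}_B[\vec{z}\,]$ with $L_j(\vec{z}^{(i)}) = \delta_{ij}$. The $L_j$ have coefficients in $\mathbb{F}_B$ rather than $\mathbb{F}_p$, but by the first step each such coefficient is itself expressible as an $\mathbb{F}_p$-polynomial in finitely many elements of $B$. Introducing fresh auxiliary variables $\vec{y}$ and substituting these expressions gives $\tilde{L}_j(\vec{z}\,,\vec{y}\,) \in \mathbb{F}_p[\vec{z}\,,\vec{y}\,]$, together with a designated $\vec{y}^* \in B^{N'}$ such that $\tilde{L}_j(\vec{z}\,,\vec{y}^*) = L_j(\vec{z}\,)$ for all $\vec{z}$. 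Setting $P(\vec{x}\,,\vec{z}\,,\vec{y}\,) := \sum_{j=1}^m \tilde{L}_j(\vec{z}\,,\vec{y}\,)\, Q_j(\vec{x}\,)$ yields an $\mathbb{F}_p$-polynomial in $n := N + M + N'$ variables. Evaluating at $(\vec{b}^{(i)},\vec{z}^{(i)},\vec{y}^*) \in B^n$ gives $\sum_j L_j(\vec{z}^{(i)}) Q_j(\vec{b}^{(i)}) = y_i$, so $\mathbb{F}_B \subseteq P(B,\ldots,B)$; the reverse inclusion holds automatically since $P$ has $\mathbb{F}_p \subseteq \mathbb{F}_B$ coefficients and is evaluated on $B \subseteq \mathbb{F}_B$. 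Hence $P(B,\ldots,B) = \mathbb{F}_B$.

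The main obstacle I foresee is the mild circularity in constructing the selectors $\tilde{L}_j$: Lagrange interpolation naturally uses inverses of differences $b - b'$ with $b,b' \in B$, and these inverses live in $\mathbb{F}_B$ but typically not in $\mathbb{F}_p$. The first step resolves this in principle — every such inverse is itself an $\mathbb{F}_p$-polynomial in $B$ — but some care is needed so that all substitutions fit together into a single polynomial in $\mathbb{F}_p[\vec{x},\vec{z},\vec{y}]$ with one common "correct" evaluation point $\vec{y}^* \in B^{N'}$ that simultaneously realizes every coefficient of every $L_j$. Once this bookkeeping is handled, the construction is complete.
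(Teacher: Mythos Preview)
The paper does not actually prove this lemma: it is quoted verbatim from \cite{lili} (Li and Roche-Newton), so there is no ``paper's own proof'' to compare against. Your task is therefore to supply an argument the paper takes for granted, and the one you give is essentially correct.

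Your first step --- that the $\mathbb{F}_p$-subalgebra $R$ generated by $B$ is a finite integral domain, hence a field, hence equals $\mathbb{F}_B$ --- is the key observation and is entirely sound. The second step is also valid, and the ``obstacle'' you flag is not really one: once you have fixed, for every coefficient $c$ of every $L_j$, an $\mathbb{F}_p$-polynomial $S_c$ and a tuple $\vec u_c\in B^{k_c}$ with $S_c(\vec u_c)=c$, you simply allocate a disjoint block of variables to each such coefficient and take $\vec y^*$ to be the concatenation of all the $\vec u_c$. There is no interaction between blocks, so no circularity. For the selectors themselves you can make life easy by taking $M=m$, choosing two distinct elements $b_0,b_1\in B$, letting $\vec z^{(j)}$ be the tuple with $b_1$ in coordinate $j$ and $b_0$ elsewhere, and setting
\[
L_j(\vec z)\;=\;\frac{z_j-b_0}{b_1-b_0}\ \prod_{k\ne j}\frac{z_k-b_1}{b_0-b_1},
\]
which visibly satisfies $L_j(\vec z^{(i)})=\delta_{ij}$ and has only the single $\mathbb{F}_B$-coefficient $(b_1-b_0)^{-(m-1)}\cdot(-1)^{?}$ up to sign in each monomial --- all of which reduce to powers of $(b_1-b_0)^{-1}\in\mathbb{F}_B=R$.

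In short: your proposal is correct and self-contained; the paper offers no alternative route to compare it with.
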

\bigskip
We are now ready to prove Theorem \ref{thm-oliver}. 
\bigskip
\begin{proof}[Proof of Theorem \ref{thm-oliver}]
WLOG, we may assume $1 \in A$ by considering $\frac{1}{x} A$ for some $x \in A$. We first define the ratio set: 
\[R(A, B):=\left\lbrace  \frac{a_1-a_2}{b_1-b_2}\colon a_1, a_2\in A, b_1, b_2\in B, b_1\ne b_2\right\rbrace.\]
We now consider the following cases:

{\bf Case 1:} $1+R(A, B)\not\subset R(A, B)$.

In this case, there exist $a_1, a_2\in A$ and $b_1\ne b_2\in B$ such that 
\[r:=1+\frac{a_1-a_2}{b_1-b_2}\not\in R(A, B).\]

First, we apply Lemma \ref{lm-oliver2} so that there exists a subset $A'\subset A$ such that $|A'|\gg |A|$ and 
\begin{equation}\label{eq-o-1}
|(b_1-b_2)A'+(b_1-b_2)B+(a_1-a_2)B|\ll \frac{|A+B||(b_1-b_2)A+(a_1-a_2)B|}{|A|}.\end{equation}
On the other hand, we have 
\begin{equation}\label{eq-o-2}|(b_1-b_2)A'+(b_1-b_2)B+(a_1-a_2)B| \geq |A'+rB|.\end{equation}
Since $r\not\in R(A, B)$, the equation 
\[a_1+rb_1=a_2+rb_2\]
has no non-trivial solutions, i.e. solutions $(a_1, b_1, a_2, b_2)$ with $b_1\ne b_2$. 
This implies that 
\begin{equation}\label{eq-o-3}|A'+rB|=|A'||B|. \end{equation}
We now give an upper bound for $(b_1-b_2)A+(a_1-a_2)B=b_1A+a_1B-b_2A-a_2B$ which will be used in the rest of the proof. 

Lemma \ref{lm-oliver2} tells us that there exists a subset $X\subset A$ such that $|X|\gg |A|$ and 
\[|X+b_1A+a_1B|\ll \frac{|A+b_1A||A+a_1B|}{|A|}\ll \frac{|A+AB|^2}{|A|},\]
and there exists a subset $X'\subset X$ with $|X'|\gg |X|$ such that 
\[|X'+b_2A+a_2B|\le \frac{|X+b_2A||X+a_2B|}{|X|}\ll \frac{|A+AB|^2}{|A|}. \]
Applying Lemma \ref{lm-oliver1}, we have 
\begin{align}\label{eq-o-4}
|b_1A+a_1B-b_2A-a_2B|&\le \frac{|X'+b_1A+a_1B||X'+b_2A+a_2B|}{|X'|}\\
&\ll \frac{|X+b_1A+a_1B||X'+b_2A+a_2B|}{|A|}\nonumber\\
&\le \frac{|A+BA|^4}{|A|^3}\nonumber.
\end{align}
Putting (\ref{eq-o-1}-\ref{eq-o-4}) together, we obtain 
\[|A+AB|^5\gg |A|^5|B|,\]
and we are done.

{\bf Case 2:} $B\cdot R(A, B)\not\subset R(A, B)$.
Similarly, in this case, there exist $a_1, a_2\in A$ and $b, b_1, b_2\in B$ such that 
\[r:=b\cdot \frac{a_1-a_2}{b_1-b_2}\not\in R(A, B).\]
Since $0\in R(A, B)$, we have $b\ne 0$, and $a_1\ne a_2$. This gives us that $r^{-1}$ exists.

Using the same argument as above, we have 
\begin{align*}
|A||B|=|A+rB|&=|r^{-1}A+B|\le \frac{|b^{-1}A+A||(a_1-a_2)B+(b_1-b_2)A|}{|A|}\\
&\le \frac{|A+AB||A+AB|^4}{|A|^4}.
\end{align*}
Thus we obtain 
\[|A+AB|^5\gg |A|^5|B|,\]
and we done.

{\bf Case 3:} $B^{-1}\cdot R(A, B)\not\subset R(A, B)$.

As above, in this case, there exist $a_1, a_2\in A$ and $b\ne 0, b_1\ne b_2\in B$ such that 
\[r:=b^{-1}\cdot \frac{a_1-a_2}{b_1-b_2}\not\in R(A, B).\]
Since $0\in R(A, B)$, we have $a_1\ne a_2$. This gives us that $r^{-1}$ exists. The rest is the same as the Case $2$.

{\bf Case 4:} We consider the case when
\begin{align}
1+R(A, B)&\subset R(A, B)\\
B\cdot R(A, B)&\subset R(A, B)\\
B^{-1}\cdot R(A, B)&\subset R(A, B).
\end{align}

Now we are going to show that for any polynomial $P(x_1, \ldots, x_n)$ in $n$ variables, for some positive integer $n$, and coefficients belonging $\mathbb{F}_p$ such that 
\[P(B, \ldots, B)+R(A, B)\subset R(A, B).\]

Indeed, it is enough to show that 
\[1+R(A, B)\subset R(A, B), ~B^d+R(A, B)\subset R(A, B),\]
for any integer $d\ge 1$, and $B^d=B\cdots B$ ($d$ times).

It follows from the assumption that  the first condition $1+R(A, B)\subset R(A, B)$ is satisfied.   For the second condition, it is sufficient to prove it for $d=2$, since we can use inductive arguments. 

Let $b, b'$ be arbitrary elements in $B$. We now show that 
\[bb'+R(A, B)\subset R(A, B).\]
If either $b=0$ or $b'=0$, then we are done. Thus we may assume that $b\ne 0$ and $b'\ne 0$. 

First we have 
\[b+R(A, B)=b(1+b^{-1}R(A, B))\subset b(1+R(A, B)))\subset R(A, B),\]
and 
\[bb'+R(A, B)=b(b'+b^{-1}R(A, B))\subset b(b'+R(A, B))\subset bR(A, B)\subset R(A, B).\]

In other words, for any polynomial $P(x_1, x_2, \ldots, x_n)\in \mathbb{F}_p[x_1, \ldots, x_n]$ we have 
\[P(B, \ldots, B)+R(A, B)\subset R(A, B).\]

Furthermore, Lemma \ref{lm-oliver3} tells us that there exists a polynomial $P$ such that $P(B, \ldots, B)=\mathbb{F}_B$. 

This implies that 
\[\mathbb{F}_B+R(A, B)\subset R(A, B).\]

It follows from our assumption of the theorem that 
\[|B|=|B\cap \mathbb{F}_B|\le |\mathbb{F}_B|^{1/2}.\]
Hence, $|R(A, B)|\ge |\mathbb{F}_B|\ge |B|^2$.

Next we shall show that there exists $r\in R(A, B)$ such that either 
\[|A+rB|\gg |A||B|,\]
or 
\[|A+rB|\gg |B|^2.\]
Indeed, let $E^+(X, Y)$ be the number of tuples $(x_1, x_2, y_1, y_2)\in X^2\times Y^2 $ such that 
\[x_1+y_1=x_2+y_2.\]
We have the sum $\sum_{r\in R(A, B)}E^+(A, rB)$ is the number of tuples $(a_1, a_2, b_1, b_2)\in A^2\times B^2$ such that 
\[a_1+rb_1=a_2+rb_2\]
with $a_1, a_2\in A$, $b_1, b_2\in B$ and $r\in R(A, B)$. It is easy to see that there are at most $|R(A, B)||A||B|$ tuples with $a_1=a_2, b_1=b_2$, and at most $|A|^2|B|^2$ tuples with $b_1\ne b_2$. Therefore, we get
\[\sum_{r\in R(A, B)}E^+(A, rB)\le |R(A, B)||A||B|+|A|^2|B|^2\le |R(A, B)|(|A||B|+|A|^2).\]
By the pigeon-hole principle, there exists $r:=\frac{a_1-a_2}{b_1-b_2}\in R(A, B)$ such that 
\[E(A, rB)\le |A||B|+|A|^2.\]
So, either 
\[|A+rB|\gg |A||B|,\]
or 
\[|A+rB|\gg |B|^2.\]

We now fall into two small cases:
\begin{itemize}
\item[1.] If $|A+rB|\gg |A||B|$, then, applying Lemma \ref{lm-oliver1}, we have 
\[|A||B|=|A+rB|= |(b_1-b_2)A+(a_1-a_2)B|\le \frac{|AB+A|^4}{|A|^3},\]
which gives us 
\[|A+AB|\gg |A||B|^{1/2}.\] 
\item[2.] If $|A+rB|\gg |B|^2$, then we have 
\[|B|^2\ll |A+rB|= |(b_1-b_2)A+(a_1-a_2)B|\le \frac{|AB+A|^4}{|A|^3},\]
which gives us 
\[|A+AB|\gg |A|^{3/4}|B|^{2/4}.\]
This completes the proof of the theorem.
\end{itemize}
\end{proof}
\section{Proofs of Theorems \ref{thm3} and \ref{thm4}}
In the proof of Theorem \ref{thm3}, we make use of the following version of Balog-Szemer\'{e}di-Gowers theorem due to Schoen \cite{schoen}. 
\bigskip
\begin{theorem}[\cite{schoen}, Theorem 1.1]\label{bsg}
Let $G$ be an abelian group.  Suppose that $A$ is a subset of $G$, and $E^+(A)$ denotes the additive energy which is the number of solutions $(a,b,c,d)\in A^4$ to the equation $a+b=c+d.$
If $E^+(A)$ is equal to $k|A|^3$, then there exists $A'\subset A$ with $|A'|\gg k|A|$ such that 
\[|A'-A'|\ll k^{-4}|A'|.\]
\end{theorem}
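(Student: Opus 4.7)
The plan is to prove this refined Balog–Szemer\'edi–Gowers theorem by popular-sums extraction followed by Pl\"unnecke–Ruzsa calculus. For $s\in G$, let $r(s):=|\{(a,b)\in A^2 : a+b=s\}|$, so that $\sum_s r(s)=|A|^2$ and $\sum_s r(s)^2=E^+(A)=k|A|^3$. First I would dyadically pigeonhole on the level sets $\{s : r(s)\in [2^j,2^{j+1})\}$ to isolate a threshold $m$ and a popular-sums set $P\subset A+A$ with $r(s)\geq m$ for each $s\in P$, in such a way that $|P|\cdot m^2\gtrsim k|A|^3$ (up to a logarithmic factor). Combining this with $\sum_{s\in P} r(s)\leq |A|^2$ forces $m\gtrsim k|A|$, and therefore also $|P|\lesssim k^{-1}|A|$.

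Next, I would form the graph $H$ on vertex set $A$ by joining $a\sim a'$ whenever $a+a'\in P$; by construction $H$ has $\gtrsim k|A|^2$ edges. A Balog–Szemer\'edi–Gowers-type extraction, carried out either via Gowers's path-counting argument or via dependent random choice, would then produce a subset $A'\subset A$ with $|A'|\gtrsim k|A|$ having the property that for a positive fraction of pairs $(a_1,a_2)\in A'\times A'$ there exist $\gtrsim k^3|A|^3$ witness quadruples $(x_1,x_2,x_3,x_4)\in A^4$ with
\[
a_1-a_2=(x_1+x_2)-(x_3+x_4),\qquad x_1+x_2,\ x_3+x_4\in P.
\]
Concretely, for $a_1,a_2\in A'$ one typically has many common neighbours $y\in A$ in $H$, whence $a_1-a_2=(a_1+y)-(a_2+y)\in P-P$ with large multiplicity.

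The final step is to convert this representation count into a genuine bound on $|A'-A'|$. I would feed the information into Pl\"unnecke–Ruzsa in its sharp form: if $|X+A'|\leq K|A'|$, then by the Ruzsa triangle inequality $|A'-A'|\leq K^2|A'|$. Taking $X$ to be (a restriction of) the popular-sums set $P$ and exploiting the multiplicity of representations obtained in Step 2, one obtains $|X+A'|\lesssim k^{-2}|A'|$, and therefore $|A'-A'|\lesssim k^{-4}|A'|$, which is the desired conclusion.

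The main obstacle is achieving the sharp exponent $k^{-4}$ rather than the substantially worse polynomial dependence coming out of the classical BSG argument. This requires a very careful accounting of losses at every step: (i) the pigeonholing threshold $m$ must be chosen so that the losses in $|P|$ and $m$ balance against each other, (ii) the graph extraction must be performed without spending extra powers of $k$ in wasteful Cauchy–Schwarz invocations, and (iii) Pl\"unnecke–Ruzsa must be invoked only once, in the form $K\mapsto K^2$, rather than as an iterated sumset bound that would cost another factor of $k^{-1}$ at each application. Balancing these three constraints simultaneously is the core technical content of Schoen's argument.
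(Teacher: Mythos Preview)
The paper does not contain a proof of this statement. Theorem~\ref{bsg} is quoted verbatim from Schoen's paper \cite{schoen} (Theorem~1.1 there) and is used in the proof of Theorem~\ref{thm3} purely as a black box; the authors give no argument of their own for it. Consequently there is nothing in the present paper to compare your proposal against.

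As a separate remark on your sketch itself: the outline you give is essentially the classical Balog--Szemer\'edi--Gowers framework (popular-sum pigeonholing, a dense graph on $A$, path-counting or dependent random choice, then Pl\"unnecke--Ruzsa), and with generic bookkeeping that route yields only $|A'-A'|\ll k^{-C}|A'|$ for some moderately large $C$. The step you flag as the ``main obstacle'' is indeed the whole point: your Step~3, where you assert that one can arrange $|X+A'|\lesssim k^{-2}|A'|$ for a suitable $X$ and then invoke Ruzsa's triangle inequality once, is not justified by anything in Steps~1--2 as written. The information coming out of the graph extraction is a lower bound on the number of representations of differences $a_1-a_2$ in $P-P$, which does not directly translate into a sumset bound of the shape $|X+A'|\le K|A'|$; bridging that gap without losing extra powers of $k$ is precisely Schoen's contribution, and your proposal does not indicate how to do it. If you intend to supply a proof rather than cite \cite{schoen}, that step needs a concrete mechanism.
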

\bigskip
We also will need the following results. 
\bigskip
\begin{theorem}\label{thm-long}
For $A, B, C, D\subset \mathbb{F}_p$, let $Q(A, B, C, D)$ be the number of $8$-tuples \[(a_1, b_1, c_1, d_1, a_2, b_2, c_2, d_2)\in (A\times B\times C\times D)^2\]
such that 
\[a_1b_1+c_1d_1=a_2b_2+c_2d_2.\]
We have 
\[Q(A, B, C, D)\lesssim \frac{|A|^2|B|^2|C|^2|D|^2}{p}+|C|^2|B||D|^{3/2}|A|^{1/2}E^\times(A, B)^{1/2}+|A||D|^3|B||C|+|A|^3|D||B||C|,\]
where 
\[E^\times (A, B)=\#\{(a_1, a_2, b_1, b_2)\in A^2\times B^2\colon a_1b_1=a_2b_2\}.\]
\end{theorem}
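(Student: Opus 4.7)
\medskip
\textbf{Proof proposal.}
The plan is to express $Q(A,B,C,D)$ as a weighted point-plane incidence count and then invoke Rudnev's point-plane incidence theorem, preceded by a Cauchy--Schwarz step that produces the factor $E^\times(A,B)^{1/2}$ appearing in the middle term of the claimed bound.

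First, I would introduce the multiplicative representation function $r_{AB}(s):=\#\{(a,b)\in A\times B: ab=s\}$, so that $\sum_{s} r_{AB}(s)^2 = E^\times(A,B)$. Parametrising the pair $(a_1,b_1)$ by its product $s=a_1b_1$ weighted by $r_{AB}(s)$, one can rewrite
\[
Q(A,B,C,D)=\sum_{s}r_{AB}(s)\cdot M(s),\qquad M(s):=\#\{(c_1,d_1,a_2,b_2,c_2,d_2): a_2b_2+c_2d_2-c_1d_1=s\}.
\]
Applying the Cauchy--Schwarz inequality yields $Q(A,B,C,D)^2\le E^\times(A,B)\cdot\sum_{s}M(s)^2$; the desired $E^\times(A,B)^{1/2}$ factor is recovered upon taking the square root.

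The remaining task is to bound $\sum_s M(s)^2$, which counts $12$-tuples satisfying
\[
a_2b_2-a_2'b_2'+c_2d_2-c_2'd_2'=c_1d_1-c_1'd_1'.
\]
Here I would fix three of the twelve variables---for example $(a_2,a_2',c_1')\in A\times A\times C$---so that the constraint becomes linear in the remaining triple $(b_2,b_2',d_1)\in B\times B\times D$, defining a plane in $\mathbb{F}_p^3$ parametrised by the other five variables. Taking $P=B\times B\times D$ as the point set and the (weighted) family of such planes as $\Pi$, Rudnev's theorem yields a bound of the shape
\[
\#\text{incidences}\ll \frac{|P|\,|\Pi|}{p}+|P|^{1/2}|\Pi|+k|\Pi|,
\]
where $k$ is the maximum number of collinear points of $P$. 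The first Rudnev term is designed to produce, after combining with the Cauchy--Schwarz, the main bound $|A|^2|B|^2|C|^2|D|^2/p$; the second Rudnev term should produce the middle bound $|C|^2|B||D|^{3/2}|A|^{1/2}E^\times(A,B)^{1/2}$; and the third Rudnev term should produce the two ``trivial'' error terms $|A|^3|B||C||D|$ and $|A||B||C||D|^3$ once $k$ is bounded by the appropriate collinearity count along the axis-aligned directions of $P$.

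The main obstacle lies in the bookkeeping required to recover both asymmetric trivial terms: because the statement carries both an $|A|^3$ and a $|D|^3$ factor, it is likely that the argument must be run in two complementary incidence setups---once with the point/plane split chosen so that the collinearity parameter is controlled by a power of $|A|$, and again in a symmetric setup controlled by a power of $|D|$---and their contributions then combined. Verifying the precise value of $k$ and the size of the weighted plane family in each setup will be the delicate step.
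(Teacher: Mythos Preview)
Your Cauchy--Schwarz opening move is lossy in a way that cannot be repaired within your outline. After
\[
Q^2\le E^\times(A,B)\sum_s M(s)^2,
\]
any bound of the form $\sum_s M(s)^2\ll |P||\Pi|/p$ coming from the first Rudnev term gives, upon taking square roots, a contribution to $Q$ of order $E^\times(A,B)^{1/2}|A||B||C|^2|D|^2/p^{1/2}$, not $|A|^2|B|^2|C|^2|D|^2/p$. Even with the trivial estimate $E^\times(A,B)\le |A||B|\min(|A|,|B|)$ this is weaker than the stated main term whenever $\max(|A|,|B|)<p$. A similar check shows your proposed point set $P=B\times B\times D$ and the (nine-parameter, not five-parameter) plane family do not reproduce the middle term $|C|^2|B||D|^{3/2}|A|^{1/2}E^\times(A,B)^{1/2}$ either: the $|P|^{1/2}|\Pi|$ term produces the wrong exponents in $|A|,|B|,|D|$ after the square root.

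The paper does not apply Cauchy--Schwarz to $Q$ at all. Instead it writes
\[
Q=\sum_{\lambda,\mu} r_{CD}(\lambda)\,r_{AB}(\mu)\,n(\lambda,\mu),\qquad n(\lambda,\mu)=\#\{(a,b,c,d):ab+\lambda=cd+\mu\},
\]
then dyadically pigeonholes \emph{both} $r_{AB}$ and $r_{CD}$ onto level sets $P_1=\{\lambda:r_{CD}(\lambda)\sim\Delta_1\}$ and $P_2=\{\mu:r_{AB}(\mu)\sim\Delta_2\}$, absorbing only logarithms (hence the $\lesssim$). After pigeonholing, $\sum_{\lambda\in P_1,\mu\in P_2}n(\lambda,\mu)$ is a \emph{single} point--plane incidence count with points $(a,d,\lambda)\in A\times D\times P_1$ and planes $bX-cY+Z=\mu$ indexed by $B\times C\times P_2$; Rudnev applies with $k=\max(|A|,|D|)$. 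The factor $E^\times(A,B)^{1/2}$ then appears because on a level set $\Delta_1|P_1|^{1/2}\le\bigl(\sum_\lambda r_{CD}(\lambda)^2\bigr)^{1/2}$, while $\Delta_1|P_1|\le|A||B|$ and $\Delta_2|P_2|\le|C||D|$ handle the other terms. The two trivial terms $|A|^3|B||C||D|$ and $|A||B||C||D|^3$ come from a short case analysis on whether $|A|\lessgtr|D|$ when the $k$-term dominates, not from running two separate incidence setups. In short, replace the outer Cauchy--Schwarz by a double dyadic decomposition and set up the incidence problem directly on $A\times D\times(\text{level set})$.
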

To prove this theorem, we need the following version of point-plane incidence bound due to Rudnev in \cite{m}.
\bigskip
\begin{theorem}[\cite{m}]\label{point-plane}
Let $P$ be a set of points in $\mathbb{F}_p^3$ and let $\Pi$ be a set of planes in $\mathbb{F}_p^3$. Suppose that $|P|\le |\Pi|$, and there are at most $k$ collinear points in $P$ for some $k$, then the number of incidences between $P$ and $\Pi$ is bounded by 
\[I(P, \Pi)\le \frac{|P||\Pi|}{p}+|P|^{1/2}|\Pi|+k|P|.\]
\end{theorem}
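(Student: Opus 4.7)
The plan is to view the defining identity $a_1b_1+c_1d_1 = a_2b_2+c_2d_2$ as a point--plane incidence in $\mathbb{F}_p^3$ and to apply Theorem~\ref{point-plane}, with the factor $E^\times(A,B)^{1/2}$ appearing through a dyadic decomposition of the multiplicative representation function for $AB$.

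Concretely, set $t := a_1b_1$ and write $f(t) := |\{(a,b)\in A\times B : ab = t\}|$, so that $\sum_t f(t)^2 = E^\times(A,B)$. To each 8-tuple I associate the point $p = (t, d_1, a_2) \in AB \times D \times A$ (weighted by $f(t)$) and, for each $(b_2, c_1, c_2, d_2)$, the labeled plane
\[
\pi_{(b_2, c_1, c_2, d_2)} : X + c_1 Y - b_2 Z = c_2 d_2,
\]
so that the identity is exactly the incidence $p \in \pi$. Decompose $AB$ dyadically by $T_i = \{t : 2^i \le f(t) < 2^{i+1}\}$, giving $\sum_i 2^i|T_i| \le |A||B|$ and $\sum_i 2^{2i}|T_i| \lesssim E^\times(A,B)$. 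For each level, apply Theorem~\ref{point-plane} to $P_i = T_i \times D \times A$ and the plane family $\Pi$ of size $\lesssim |B||C|^2|D|$, and sum with weight $2^i$:
\[
Q(A,B,C,D) \;\lesssim\; \sum_i 2^i \Bigl(\frac{|P_i||\Pi|}{p} + |P_i|^{1/2}|\Pi| + k_i|P_i|\Bigr),
\]
where $k_i$ is the maximum number of collinear points in $P_i$.

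Each Rudnev term should contribute one of the four target summands. The $|P||\Pi|/p$ term gives $|A|^2|B|^2|C|^2|D|^2/p$ via $\sum 2^i|T_i| \le |A||B|$. For the $|P|^{1/2}|\Pi|$ term, Cauchy--Schwarz gives $\sum_i 2^i|T_i|^{1/2} = \sum_i(2^{2i}|T_i|)^{1/2} \lesssim E^\times(A,B)^{1/2}$ (up to a logarithm), and multiplying by $|A|^{1/2}|D|^{1/2}|\Pi|$ produces $|C|^2|B||D|^{3/2}|A|^{1/2}E^\times(A,B)^{1/2}$. The collinearity contribution $\sum_i 2^i k_i|P_i|$ I expect to split according to whether $k_i$ is governed by a line parallel to the $A$-axis or to the $D$-axis of $\mathbb{F}_p^3$; combined with the plane-multiplicity factor $g(c_2 d_2)$ arising from converting the multiset of labeled planes to a set of distinct planes, these two cases will produce the terms $|A|^3|D||B||C|$ and $|A||D|^3|B||C|$ respectively.

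The main obstacle will be the multiset structure of the plane family: distinct parameter 4-tuples $(b_2, c_1, c_2, d_2)$ can yield the same geometric plane whenever $c_2 d_2 = c_2'd_2'$, while Theorem~\ref{point-plane} is stated for sets of planes. I plan to resolve this by a secondary dyadic decomposition on $g(t) := |\{(c,d)\in C\times D : cd = t\}|$, applying Theorem~\ref{point-plane} at each level to the set of distinct planes and then reassembling via $\sum_j 2^j|G_j| \le |C||D|$. Correctly threading these multiplicities through the $k|P|$ term--so that the collinearity contribution collapses to precisely $|A||D|^3|B||C|+|A|^3|D||B||C|$ rather than an expression with extra $g$- or $f$-weights--is the delicate technical point, since naive bounds on $k_i$ using $\max(|T_i|,|A|,|D|)$ combined with $\sum_i 2^i|T_i|^2 \lesssim |A|^2|B|^2$ must be paired against the right multiplicity in $\Pi$ to recover the stated shape.
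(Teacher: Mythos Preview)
Your proposal is not a proof of the stated theorem. Theorem~\ref{point-plane} is Rudnev's point--plane incidence bound; the paper does not prove it at all but quotes it from~\cite{m} as a black box. What you have sketched is a proof of Theorem~\ref{thm-long} (the bound on $Q(A,B,C,D)$), in which Theorem~\ref{point-plane} is \emph{applied}, not established. Nothing in your write-up addresses the actual content of Rudnev's theorem (the reduction to a point--line problem in the Klein quadric / Guth--Katz machinery over $\mathbb{F}_p$), so as a proof of the stated result it is simply off target.

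If your intention was Theorem~\ref{thm-long}, then your approach is close in spirit to the paper's but organised differently. The paper also writes $Q$ as a weighted incidence count and dyadically decomposes both representation functions $r_{AB}$ and $r_{CD}$; however, instead of summing over all dyadic levels as you propose, it uses the pigeonhole principle twice to pass to a single pair of levels $(\Delta_1,\Delta_2)$ and applies Theorem~\ref{point-plane} once, with point set $A\times D\times P_1$ and plane set $\{bX-cY+Z=\mu : b\in B,\,c\in C,\,\mu\in P_2\}$. This sidesteps exactly the ``multiset of planes'' obstacle you flag: after pigeonholing, both the point set and the plane set are genuine sets, and the weights $\Delta_1,\Delta_2$ sit outside the incidence bound. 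Your summed-over-levels variant can be made to work, but the collinearity term is where it bites: you must invoke the refinement from~\cite{frank1} (as the paper does) so that collinearity along the ``product'' coordinate is harmless and $k=\max\{|A|,|D|\}$, and you must handle the case $|P_i|>|\Pi_j|$ by swapping points and planes---the paper does this explicitly (``we can switch the point set and the plane set''), while your sketch leaves it as the unresolved ``delicate technical point.'' Without that swap and the \cite{frank1} input, the $k|P|$ contribution in your double sum does not collapse to $|A||D|^3|B||C|+|A|^3|D||B||C|$.
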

\bigskip 

we are now ready to prove Theorem \ref{thm-long}. We will follow the ideas of the proof of \cite[Theorem $32$]{s-new}.

\bigskip
\begin{proof}[Proof of Theorem \ref{thm-long}]
We have 
\[Q(A, B, C, D)=\sum_{\lambda, \mu}r_{CD}(\lambda)r_{AB}(\mu)n(\lambda, \mu),\]
where $r_{CD}(\lambda)$ is the number of pairs $(c, d)\in C\times D$ such that $cd=\lambda$, $r_{AB}(\mu)$ is the number of pairs $(a, b)\in A\times B$ such that $ab=\mu$, 
and $n(\lambda, \mu)=\sum_{x}r_{AB+\lambda}(x)r_{CD+\mu}(x)$. If we split the sum $Q(A, B, C, D)$ into intervals, we get
\[Q(A, B, C, D)\ll \sum_{i=1}^{L_1} \sum_{j=1}^{L_2}\sum_{\lambda, \mu}n(\lambda, \mu) r^{(i)}_{CD}(\lambda)r^{(j)}_{AB}(\mu),\]
where $L _1\le \log(|C||D|), L_2\le \log(|A||B|)$, $r_{AB}^{(i)}(\mu)$ is the restriction of the function $r_{AB}(x)$ on the set $P_i:=\{\mu \colon \Delta_i\le r_{AB}(\mu)< 2\Delta_i\}$, and $r_{CD}^{(i)}(\lambda)$ is the restriction of the function $r_{CD}(x)$ on the set $P_i:=\{\lambda\colon \Delta_i\le r_{CD}(\lambda)< 2\Delta_i\}$. Applying the pigeon-hole principle two times, there exist sets $P_i$ and $P_j$ such that 
\[Q(A, B, C, D)\lesssim \sum_{\lambda, \mu}n(\lambda, \mu)r_{CD}^{(i)}(\lambda)r_{AB}^{(j)}(\mu)\lesssim \Delta_i\Delta_j\sum_{\lambda, \mu}n(\lambda, \mu)P_i(\mu)P_j(\lambda),\]
where $P_i(x)$ is the indicator function of the set $P_i$.  For the simplicity, we suppose that $i=1$ and $j=2$. 

One can check that the sum $\sum_{\lambda, \mu}n(\lambda, \mu)P_1(\lambda)P_2(\mu)$ is the number of incidences between points $(a, d, \lambda)\in A\times D\times P_1 \subset \mathbb{F}_p^3$ and planes in $\mathbb{F}_p^3$ defined by 
\[bX-cY+Z=\mu,\]
where $b\in B, c\in C, \mu\in P_2$. 

With the way we define the plane set, it follows from \cite{frank1} that we can apply Theorem \ref{point-plane} with $k=\max\{|A|, |D|\}$. Thus we obtain 
\begin{align}\label{com-pli}Q(A, B, C, D)&\lesssim \Delta_1\Delta_2\left(\frac{|A||B||C||D||P_1||P_2|}{p}\right)\\&+\Delta_1\Delta_2\left(|A|^{1/2}|B||C||D|^{1/2}|P_1|^{1/2}|P_2|+\max\{|A|, |D|\}|A||D||P_1|\right).\nonumber
\end{align}

It is clear in our argument that we can switch the point set and the plane set, we also can do the same thing for $P_1$ and $P_2$ in the definition of the point set and the plane set. So, without loss of generality, we can assume that $|P_1|\le |P_2|$, $|A||D|\le |B||C|$. We now consider the following cases:
\begin{itemize}
\item If the second term dominates, then we have 
\[Q(A, B, C, D)\lesssim |C|^2|B||D|^{3/2}|A|^{1/2}E^\times(A, B)^{1/2},\]
since 
\[\Delta_2|P_2|\le |C||D|, ~\Delta_1|P_1|^{1/2}\le E^\times (A, B)^{1/2}. \]

\item If the first term dominates, then we have  
\[Q(A, B, C, D)\lesssim \frac{|A|^2|B|^2|C|^2|D|^2}{p}.\]
since 
\[\Delta_2|P_2|\le |C||D|, ~\Delta_1|P_1| \le |A||B|. \]
\item If the last term dominates, then we study the following:
\begin{enumerate}
\item Suppose $|A|\le |D|$. If $|D|\le |P_2|$, then it is easy to check that the second term  in (\ref{com-pli}) will be bigger than the last term. Thus, we can suppose that $|D|\ge |P_2|$. Since $|P_1|\le |P_2|$, we have 
$$|A||D|^2|P_1|\le |A||D|^3.$$ On the other hand, it is clear that $\Delta_1\Delta_2\le |B||C|$. This means
\[Q(A, B, C, D)\lesssim |A||D|^3|B||C|.\]
\item Suppose $|A|\ge |D|$. By repeating the same argument, we obtain 
\[Q(A, B, C, D)\lesssim |A|^3|D||B||C|.\]
\end{enumerate}
\end{itemize}
This completes the proof of the theorem.
\end{proof}
\bigskip
\begin{proof}[Proof of Theorem \ref{thm3}]
Let $N$ be the number of tuples \[(x_1, y_1, z_1, t_1, x_2, y_2, z_2, t_2, x'_1, y'_1, z'_1, t'_1, x'_2, y'_2, z'_2, t'_2)\in A^{16}\]
such that $
[\mathbf{x}, \mathbf{y}, 0]\cdot [\mathbf{z}, \mathbf{t}, 0]=[\mathbf{x'}, \mathbf{y'}, 0]\cdot [\mathbf{z'}, \mathbf{t'}, 0]$. This can be expressed as follows:
\begin{equation}\label{eqqq1}\left(\begin{matrix}
1& x_1&x_2&0\\
0&1&0&y_1\\
0&0&1&y_2\\
0&0&0&1\\
\end{matrix}\right)\cdot \left(\begin{matrix}
1& z_1&z_2&0\\
0&1&0&t_1\\
0&0&1&t_2\\
0&0&0&1\\
\end{matrix}\right)=\left(\begin{matrix}
1& x'_1&x'_2&0\\
0&1&0&y'_1\\
0&0&1&y'_2\\
0&0&0&1\\
\end{matrix}\right)\cdot \left(\begin{matrix}
1& z'_1&z'_2&0\\
0&1&0&t'_1\\
0&0&1&t'_2\\
0&0&0&1\\
\end{matrix}\right).\end{equation}
Thus by the Cauchy-Schwarz inequality, we have 
\begin{equation}\label{FormulaA2A20}|[A^2, A^2, 0]^2|\ge \frac{|A|^{16}}{N}.\end{equation}
From (\ref{eqqq1}), observe that  $N$ is the number of tuples
$$(x_1, y_1, z_1, t_1, x_2, y_2, z_2, t_2, x'_1, y'_1, z'_1, t'_1, x'_2, y'_2, z'_2, t'_2)\in A^{16}$$ 
satisfying the following system:
\begin{align}
\label{Nsystem1} x_1+z_1=x'_1+z'_1,&\quad x_2+z_2=x'_2+z'_2\\
\label{Nsystem2} y_1+t_1=y'_1+t'_1,&\quad y_2+t_2= y'_2+t'_2\\
\label{Nsystem3} x_1t_1+x_2t_2&=x'_1t'_1+x'_2t'_2.
\end{align}
We now insert new variables $s_1, s_2, s_3, s_4\in A+A$ in (\ref{Nsystem1}) and (\ref{Nsystem2}) as follows:
$$ x_1+z_1 = s_1 = x'_1+z'_1,$$ $$ x_2+z_2 = s_2 = x'_2+z'_2,$$ $$ y_1+t_1 = s_3 = y'_1+t'_1,$$  $$ y_2+t_2 = s_4 = y'_2+t'_2.$$ 
Set $A_{s_i}=A\cap (s_i-A)$, then we have $x_1, x_1'\in A_{s_1}$, $x_2, x_2'\in A_{s_1}$, 
$y_1, y_1'\in A_{s_3}$, $y_2, y_2'\in A_{s_4}$.

In this setting, we have 
\[N=\sum_{s_1, s_2, s_3, s_4\in A+A}Q(A_{s_1}, A_{s_2}, A_{s_3}, A_{s_4}).\]
Applying Theorem \ref{thm-long}, and assuming the second and third terms are larger than the first term,  we have
\begin{align*}
N &\lesssim \sum_{s_1, s_2, s_3, s_4}|A_{s_1}|^{1/2}|A_{s_2}||A_{s_3}|^2|A_{s_4}|^{3/2}E^\times (A_{s_1}, A_{s_2})^{1/2}+\sum_{s_1, s_2, s_3, s_4}|A_{s_1}||A_{s_2}||A_{s_3}||A_{s_4}|^{3}\\&+\sum_{s_1, s_2, s_3, s_4}|A_{s_1}|^3|A_{s_2}||A_{s_3}||A_{s_4}|\\
&\lesssim  \left(\sum_{s_1, s_2}|A_{s_1}|^{1/2}|A_{s_2}|E^\times(A_{s_1}, A_{s_2})^{1/2}\right)\left(\sum_{s_3}|A_{s_3}|^2\right)\left(\sum_{s_4}|A_{s_4}|^{3/2}\right)+|A|^{10}\\
&\le |A|E^+(A)^{3/2} \left(\sum_{s_1, s_2}|A_{s_1}|^{1/2}|A_{s_2}|E^\times(A_{s_1}, A_{s_2})^{1/2}\right)+|A|^{10},
\end{align*}
where we have used the fact that 
\[\sum_{s}|A_s|^{3/2}\le \left(\sum_{s}|A_s|\right)^{1/2}\left(\sum_{s}|A_s|^2\right)^{1/2}, ~\sum_{s}|A_s|^2\le E^+(A).\]

Moreover,  using the fact $E^\times (A, B)\le |A|^2|B|$, we have
\begin{align}
&\sum_{s_1, s_2}|A_{s_1}|^{1/2}|A_{s_2}|E^\times(A_{s_1}, A_{s_2})^{1/2}\\
&\le \sum_{s_1, s_2}|A_{s_1}|^{3/2}|A_{s_2}|^{3/2}\le |A|^2 E^+(A, A).
\end{align}
In other words, we have proved that 
\[N\lesssim |A|^3E^+(A)^{5/2}+|A|^{10}.\]
If $N\lesssim |A|^{10}$, then the theorem follows from \eqref{FormulaA2A20}. Therefore, we can assume that 
\[N\lesssim |A|^3E^+(A)^{5/2}.\]

Let $\epsilon$ be a parameter chosen later. We now consider two cases: 
\begin{enumerate}
\item Suppose that $E^+(A) < |A|^{3-\epsilon}.$ Then we have 
\[N\le |A|^{\frac{9}{2}+6-\frac{5\epsilon}{2}}.\]
From \eqref{FormulaA2A20}, this implies that
\begin{equation}\label{chose1}|[A^2, A^2, 0]^2|\ge |A|^{\frac{11}{2}+\frac{5\epsilon}{2}}.\end{equation}
\item Suppose that $E^+(A) \geq |A|^{3-\epsilon}.$ Then we can write $E^+(A)=|A|^{3-\epsilon'}$ for some $\epsilon'< \epsilon< 1.$  Notice that  Theorem \ref{bsg} implies that there exists a subset $A'\subset A$ such that $|A'|\gg |A|^{1-\epsilon}$ and 
\[|A'-A'|\ll|A|^{4\epsilon} |A'|\ll |A'|^{1+\frac{4\epsilon}{1-\epsilon}}.\]
Since $|A|\le p^{9/16}$ by our assumption, using Lemma \ref{m-dif2} with the above inequality gives 
\begin{equation}\label{lessAP}|A'\cdot A'|\gtrsim |A'|^{\frac{14}{9}-\frac{8\epsilon}{1-\epsilon}}.\end{equation}
Moreover, one can easily check that 
\[|[A^2, A^2, 0]^2|\ge |A|^4\left\vert\left\lbrace x_1t_1+x_2t_2\colon x_1, t_1, x_2, t_2\in A\right\rbrace\right\vert.\]
Therefore, 
\[|[A^2, A^2, 0]^2|\ge |A|^4\left\vert\left\lbrace x_1t_1+x_2t_2\colon x_1, t_1\in A, x_2, t_2\in A'\right\rbrace\right\vert.\]
Moreover, Lemma \ref{m-dif1} gives us 
\begin{align*}
\left\vert\left\lbrace x_1t_1+x_2t_2\colon x_1, t_1\in A, x_2, t_2\in A'\right\rbrace\right\vert &\gg \min\left\lbrace |A||A'\cdot A'|^{1/2},~ p\right\rbrace \\
&\gtrsim \min \left\lbrace |A|^{1+(1-\epsilon)(\frac{7}{9}-\frac{4\epsilon}{1-\epsilon})}, ~p\right\rbrace,
\end{align*}
where  we also utilized the inequality \eqref{lessAP} and the fact that $|A'|\gg |A|^{1-\epsilon}.$
\end{enumerate}
Thus we obtain that if $|A|\le p^{9/16},$ then
\begin{align*}|[A^2, A^2, 0]^2|&\gtrsim \min  \left\lbrace|A|^{5+(1-\epsilon)(\frac{7}{9}-\frac{4\epsilon}{1-\epsilon})}, ~p|A|^4\right\rbrace \\
&\ge |A|^{5+(1-\epsilon)(\frac{7}{9}-\frac{4\epsilon}{1-\epsilon})}\end{align*}
provided that 
$ |A|^{1+(1-\epsilon)(\frac{7}{9}-\frac{4\epsilon}{1-\epsilon})} \le p.$
It is clear that with $\epsilon=5/131$, this estimate is satisfied since $|A|\le p^{1/2}$. We also obtain 
$$ |[A^2, A^2, 0]^2|\gtrsim |A|^{\frac{11}{2}+\frac{25}{262}}.$$ 
Finally, suppose the first term is the largest term. Then we have $N \leq \frac{E^+(A,A)^4}{p}$ which is smaller than $\frac{|A|^{12}}{p}$. Thus \eqref{FormulaA2A20} shows that 
$$ |[A^2, A^2, 0]^2|\gtrsim p|A|^{4} \geq |A|^6.$$ 
This is more than what we want and completes the proof of the theorem.
\end{proof}
%\bigskip
%Using the same method, one can extend Theorem \ref{thm3} in higher degree. In particular, we have the following theorem. We leave its proof to the reader. 
%\bigskip
%\begin{theorem}\label{thm3'}
%For $A\subset \mathbb{F}_p$ with $|A|\le p^{9/16}$, we have 
%\[|[A^d, A^d, 0]^2|\gg |A|^{2d+\frac{3}{2}+\epsilon(2d-2)},\]
%where 
%\[\epsilon=\frac{\frac{23}{18}-\frac{1}{2^{d-2}}}{2d-2+\frac{43}{36\cdot 2^{d-4}}}.\]
%\end{theorem}
\bigskip
Over finite arbitrary fields $\mathbb{F}_q$, we have the following result. 
\bigskip
\begin{theorem}\label{thm-cai}
Let $q = p^n$ and let $A$ be a subset of $\mathbb{F}^{*}_{q}$. If $|A \cap \lambda F| \leq |F|^{1/2}$ for any proper subfield $F$ of $\mathbb{F}_{q}$ and any $\lambda \in \mathbb{F}_q$, then we have 
\[|[A, A, 0]^2|\gtrsim |A|^{3+\frac{1}{11}},\]
and 
\[|[A^2, A^2, 0]^2|\gtrsim |A|^{5+\frac{1}{5}}.\]
\end{theorem}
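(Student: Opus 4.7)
The plan is to prove both inequalities by exploiting the explicit form of the Heisenberg product $[\mathbf{x}, \mathbf{y}, 0] \cdot [\mathbf{z}, \mathbf{t}, 0] = [\mathbf{x}+\mathbf{z},\, \mathbf{y}+\mathbf{t},\, \sum_i x_i t_i]$, and then invoking Theorem~\ref{lli} (resp.\ Corollary~\ref{coxt1}) to supply the required sum-product input. The main task is only to choose the right fibration of the image set for each of the two bounds; no new analysis is needed beyond what the paper has already established.

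For the first inequality, I would give two separate lower bounds on $|[A, A, 0]^2|$ and then take the maximum. Writing $A_s := A \cap (s - A)$, the image of $[A, A, 0] \cdot [A, A, 0]$ consists of triples $(s_1, s_2, u)$ with $s_i \in A+A$ and $u \in A_{s_1}\cdot A_{s_2}$. Since $A \subset \mathbb{F}_q^*$, every nonempty $A_{s_2}$ contains a nonzero element $t_0$, and multiplication by $t_0$ shows $|A_{s_1}\cdot A_{s_2}| \geq |A_{s_1}|$; summing over $(s_1, s_2) \in (A+A)^2$ and using $\sum_{s_1 \in A+A} |A_{s_1}| = |A|^2$ yields
\[
|[A, A, 0]^2| \;\geq\; \sum_{s_1, s_2 \in A+A} |A_{s_1}| \;=\; |A+A|\cdot |A|^2.
\]
For the complementary bound, for each $u \in AA$ fix one factorisation $u = x_0 t_0$ with $x_0, t_0 \in A$; as $(z, y)$ ranges over $A^2$, the triples $(x_0+z,\, t_0+y,\, u)$ give $|A|^2$ distinct elements of $[A, A, 0]^2$, and distinct values of $u$ contribute disjointly via the third coordinate, so $|[A, A, 0]^2| \geq |AA|\cdot |A|^2$. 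Combining the two bounds with Theorem~\ref{lli}, which provides $\max\{|A+A|,|AA|\}\gtrsim |A|^{12/11}$, gives
\[
|[A, A, 0]^2| \;\gtrsim\; |A|^2\cdot|A|^{12/11} \;=\; |A|^{3+\frac{1}{11}}.
\]

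For the second inequality, I would apply exactly the same product-based fibration, this time in the degree-two Heisenberg group, where the third coordinate of a product element is $x_1 t_1 + x_2 t_2 \in AA + AA$. For each $u \in AA+AA$ fix a representation $u = x_1 t_1 + x_2 t_2$ with $x_i, t_i \in A$; as $(\mathbf{z}, \mathbf{y}) \in A^2 \times A^2$ varies, the pairs $(\mathbf{x}+\mathbf{z},\, \mathbf{y}+\mathbf{t})$ run through $|A|^4$ distinct values in $\mathbb{F}_q^2 \times \mathbb{F}_q^2$, so each fixed $u$ accounts for at least $|A|^4$ elements of $[A^2, A^2, 0]^2$, and triples with different $u$ are trivially separated by the third coordinate. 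This yields
\[
|[A^2, A^2, 0]^2| \;\geq\; |A|^4\cdot |AA+AA| \;\gtrsim\; |A|^4\cdot |A|^{6/5} \;=\; |A|^{5+\frac{1}{5}},
\]
where the second inequality is Corollary~\ref{coxt1}. The only subtle point in either part is verifying that the ``fix one factorisation'' step produces genuinely distinct pairs of the first two coordinates as $(\mathbf{z}, \mathbf{y})$ varies, which is immediate once $\mathbf{x}$ and $\mathbf{t}$ are frozen.
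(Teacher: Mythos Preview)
Your proposal is correct and follows essentially the same approach as the paper: you establish the observations $|[A,A,0]^2|\ge |A|^2\max\{|A+A|,|AA|\}$ and $|[A^2,A^2,0]^2|\ge |A|^4|AA+AA|$ (which the paper simply states), and then invoke Theorem~\ref{lli} and Corollary~\ref{coxt1} exactly as the paper does. Your writeup merely fills in the details of the fibration arguments that the paper leaves implicit.
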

\begin{proof}
We first observe that 
\[|[A, A, 0]^2|\ge |A|^2\cdot \max\left\lbrace |A+A|, |A\cdot A|\right\rbrace,\]
and 
\[|[A^2, A^2, 0]^2|\ge |A|^4\cdot |AA+AA|.\]
It follows from Theorem 2.7 and 2.8 that 
\[|AA+AA| \gg |A|^{6/5}, \max \left\lbrace |A+A|, |A\cdot A|\right\rbrace \gg |A|^{12/11}.\]
Therefore, we obtain 
\[|[A, A, 0]^2|\gtrsim |A|^{3+\frac{1}{11}},\]
and 
\[|[A^2, A^2, 0]^2|\gtrsim |A|^{5+\frac{1}{5}},\]
which completes the proof of the theorem.
\end{proof}
\bigskip
\begin{proof}[Proof of Theorem \ref{thm4}]
One can observe that
\begin{equation}\label{startingk}|[A^2, A^2, A]^2| \ge |A|^4|AA+AA+A+A|.\end{equation}
We now prove that if $|A|\le p^{9/16},$ then
\[|AA+AA+A+A|\gg |A|^{\frac{79}{45}}.\]
Indeed, we first prove that if $|A|\le p^{9/16},$
\begin{equation}\label{sumproductk}|AA+A+A|\gtrsim |A|^{\frac{3}{2}+\frac{1}{90}}.\end{equation}
To prove this inequality, we consider the following cases:
\begin{enumerate}
\item If $|A+A|\ge |A|^{1+\epsilon}$, then it follows from Lemma \ref{m-dif1} that 
\begin{equation}\label{con1k}|AA+A+A|\ge \min\left\{ |A|^{\frac{3}{2}+\frac{\epsilon}{2}},~p\right\}=|A|^{\frac{3}{2}+\frac{\epsilon}{2}},\end{equation} whenever 
$$|A|\le p^\frac{2}{3+\epsilon}.$$
\item If $|A+A|\le |A|^{1+\epsilon}$, then Lemma \ref{m-dif2} gives us that $|AA|\gtrsim |A|^{\frac{14}{9}-2\epsilon}$ under the condition $|A|\le p^{9/16}$. Hence, if $|A|\le p^{9/16},$ then 
\begin{equation}\label{con2k}|AA+A+A|\ge |AA|\gtrsim |A|^{\frac{14}{9}-2\epsilon}.\end{equation}
\end{enumerate}
Choosing $\epsilon=1/45$, we see from \eqref{con1k} and \eqref{con2k} that if $|A|\le p^{45/68}$ and $|A|\le p^{9/16}$, then
\[|AA+A+A|\gtrsim |A|^{\frac{3}{2}+\frac{1}{90}}.\]
Since $p^{45/68}\ge p^{9/16},$  we establish the inequality \eqref{sumproductk}.

By Lemma \ref{m-dif1} and the inequality \eqref{sumproductk}, we see that if $|A|\le p^{9/16}$, then
\[|AA+(AA+A+A)|\gg \min\left\{ |A| |AA+A+A|^{\frac{1}{2}},~p\right\} 
\gtrsim \min\left\{|A|^{\frac{7}{4}+\frac{1}{180}},~p\right\} =|A|^{\frac{79}{45}}.
\]
Finally, combining \eqref{startingk} and this estimate, we conclude that if $|A|\le p^{9/16}$, then
\[|[A^2, A^2, A]^2| \gtrsim |A|^{\frac{11}{2}+\frac{23}{90}},\]
which completes the proof of Theorem \ref{thm4} .
\end{proof}

\section*{Acknowledgments}
The authors would like to deeply thank Dr. Oliver Roche-Newton and Dr. Ilya Shkredov for many helpful discussions that make nice improvement for our Theorem \ref{thm3}.

D. Koh was supported by Basic Science Research Program through the National
Research Foundation of Korea(NRF) funded by the Ministry of Education, Science
and Technology(NRF-2015R1A1A1A05001374). T. Pham was supported by Swiss National Science Foundation grant P2ELP2175050. C-Y Shen was supported in part by MOST, through grant 104-2628-M-002-015 -MY4.

\end{document}